\theoremstyle{plain} 
\newtheorem{theorem}{\indent\sc Theorem}[section]
\newtheorem{lemma}[theorem]{\indent\sc Lemma}
\newtheorem{corollary}[theorem]{\indent\sc Corollary}
\newtheorem{proposition}[theorem]{\indent\sc Proposition}
\theoremstyle{definition} 
\newtheorem{definition}[theorem]{\indent\sc Definition}
\newtheorem{remark}[theorem]{\indent\sc Remark}
\newcommand{\dbar}{\overline{\partial}}
\newcommand{\pa}{\partial}
\newcommand{\ov}{\overline}
\newcommand{\xu}{\sqrt{-1}}
\newcommand{\Res}{\text{Res}}
\def\address#1#2{\begingroup
\noindent\parbox[t]{7.8cm}{%
\small{\scshape\ignorespaces#1}\par\vskip1ex
\noindent\small{\itshape E-mail address}%
\/: #2\par\vskip4ex}\hfill%
\endgroup}%
\title{\uppercase{Logarithmic Akizuki--Nakano vanishing theorems on weakly pseudoconvex K\"ahler manifolds}} 
\author{
%
\bigskip \\
\textsc{Yongpan Zou} 
}
\date{} 
\begin{document}

\maketitle

\footnote{ 
2020 \textit{Mathematics Subject Classification}.
Primary 32E05; Secondary 32E30, 32T35.
}
\footnote{ 
\textit{Key words and phrases}.
Logarithmic vanishing theorem, weakly pseudoconvex K\"ahler manifold, Runge approximation.
}
\footnote{ 
}

\begin{abstract}
In this paper, we establish a logarithmic vanishing theorem on weakly pseudoconvex K\"ahler manifolds, where the divisor may have infinitely many irreducible components. This result serves as a generalization of Norimatsu's findings on compact Kähler manifolds. We derive vanishing theorems for certain direct image sheaves as a direct corollary.
\end{abstract}

\section{Introduction}
One of the central topics in complex and algebraic geometry is the cohomological vanishing theorem. The famous Akizuki--Nakano vanishing theorem shows that if $F$ is a positive line bundle over an $n$-dimensional compact K\"ahler manifold $X$, then
$$
H^q(X, \Omega^p_X\otimes F) =0 \quad\text{for any} \quad p+q \geq n+1.
$$

The generalization of the Akizuki--Nakano vanishing theorem on weakly pseudoconvex or weakly $1$-complete K\"ahler manifolds has been accomplished by Nakano \cite{Nakano73, Nakano74}, Kazama \cite{Kazama73}, Abdelkader \cite{Abdelkader80}, Takegoshi \cite{Take81}, Ohsawa--Takegoshi \cite{TaOh81}, and others.

In a different vein, Norimatsu obtained the logarithmic vanishing theorem on compact K\"ahler manifolds in \cite{Norimatsu78}. Esnault and Viehweg, in \cite{EsVi86}, delved into the logarithmic de Rham complexes and vanishing theorems on projective manifolds. They derived logarithmic type vanishing theorems for the pair $(X, D)$, where $X$ is a projective manifold and $D$ is a simple normal crossing divisor. Their methods are grounded in Hodge theory and the degeneration of the Hodge to de Rham spectral sequence.

More recently, in \cite{HLWY16}, Huang--Liu--Wan--Yang obtained corresponding results on compact K\"ahler manifolds using standard analytic techniques such as the $L^2$-method.

In this paper, we aim to generalize the results of Norimatsu, Esnalut--Viehweg, and Huang--Liu--Wan--Yang to open pseudoconvex K\"ahler manifolds. When dealing with weakly pseudoconvex manifolds, it is important to consider the possibility that the divisor may have infinitely many irreducible components. On the other hand, by definition, a divisor has only locally finite irreducible components. As a result, within any relatively compact subset of the manifold, only a finite number of divisor components have non-empty intersection with this subset.

For compact K\"ahler manifolds or projective manifolds, the simple normal crossing divisor must have finitely many irreducible components. In contrast, on non-compact manifolds such as the weakly pseudoconvex K\"ahler manifolds studied in this paper, this assertion no longer holds.
We initially focus on the case where the divisor has only finitely many components. This can be deduced from the classical Akizuki--Nakano vanishing on weakly pseudoconvex K\"ahler manifolds and Deligne's filtration (for details, refer to the end of Section $2$). Specifically, we obtain:
\begin{theorem}  \label{main-finite}
Let $X$ be an $n$-dimensional weakly pseudoconvex K\"ahler manifold and $F$ be a positive line bundle on $X$. Let $D$ be a simple normal crossing divisor with finitely many irreducible components on $X$. We have
$$
H^q(X, \Omega^p_X(\log D)\otimes F) = 0
$$
for any $p+q\geq n+1$.
\end{theorem}

Then, starting from Section $3$, we shift our focus to the general case, where the number of irreducible components of the divisor may be infinite. Unless explicitly stated otherwise, this remains the default setting throughout this paper. The following two theorems represent our main results.

\begin{theorem} [=Corollary \ref{vanofholo}] \label{main}
Let $X$ be a holomorphically convex K\"ahler manifold and $F$ be a positive line bundle on $X$. Let $D$ be a simple normal crossing divisor on $X$, here the number of irreducible components of the divisor may be infinite. We have
$$
H^q(X, \Omega_X^p(\log D)\otimes F) = 0 \quad \text{for any} \quad p+q \geq n+1.
$$
\end{theorem}

The crucial case arises when $p=n$, thereby yielding $\Omega^n_X(\log D) = K_X\otimes \mathcal{O}_X(D)$. In this scenario, we extend this result to weakly pseudoconvex K\"ahler manifolds, leading to
\begin{theorem} [= Theorem \ref{q2} + Theorem \ref{q1}] \label{main2}
Let $X$ be a $n$-dimensional weakly pseudoconvex K\"ahler manifold and $F$ is a positive line bundle on $X$. Let $D$ be a simple normal crossing divisor on $X$, here the number of irreducible components of the divisor may be infinite. We have
$$
H^q(X, K_X\otimes \mathcal{O}_X(D)\otimes F) = 0.
$$
for any $q\geq 1$.
\end{theorem}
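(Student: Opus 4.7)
The plan is to prove the vanishing by induction on the number $k$ of irreducible components of $D$, reducing the statement to the Kodaira--Nakano vanishing on weakly pseudoconvex K\"ahler manifolds. The base case $k=0$ is the vanishing
\[ H^q(X, K_X\otimes F)=0\qquad(q\geq 1) \]
for any positive line bundle $F$ on a weakly pseudoconvex K\"ahler manifold $X$; this is obtained via the Bochner--Kodaira--Nakano identity combined with H\"ormander's $L^2$-estimate for $\bar\partial$, applied with respect to the complete K\"ahler metric $\omega+i\partial\bar\partial\chi(\phi)$ built from the given K\"ahler form $\omega$ and a convex function of a plurisubharmonic exhaustion $\phi$ of $X$.

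For the inductive step, let $k\geq 1$, write $D=D_1+\cdots+D_k$, and set $D':=D-D_k$. Twisting the short exact sequence
\[ 0\to \mathcal{O}_X(D')\to \mathcal{O}_X(D)\to N_{D_k/X}\otimes \mathcal{O}_{D_k}(D'|_{D_k})\to 0 \]
by $K_X\otimes F$ and applying the adjunction formula $K_X|_{D_k}\otimes N_{D_k/X}=K_{D_k}$ yields
\[ 0\to K_X\otimes\mathcal{O}_X(D')\otimes F\to K_X\otimes\mathcal{O}_X(D)\otimes F\to K_{D_k}\otimes\mathcal{O}_{D_k}(D'|_{D_k})\otimes F|_{D_k}\to 0. \]
The component $D_k$ is closed in $X$, so $\phi|_{D_k}$ is again a plurisubharmonic exhaustion and $\omega|_{D_k}$ is K\"ahler; hence $D_k$ is itself a weakly pseudoconvex K\"ahler manifold. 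Moreover $F|_{D_k}$ is positive and $D'|_{D_k}$ is SNC with $k-1$ components. The inductive hypothesis therefore applies both to $(X,D',F)$ and to $(D_k,D'|_{D_k},F|_{D_k})$, annihilating the two outer terms of the associated long exact cohomology sequence in every degree $\geq 1$, so the middle term $H^q(X,K_X\otimes\mathcal{O}_X(D)\otimes F)$ vanishes for all $q\geq 1$.

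The main obstacle is the analytic base case, not the formal induction: Kodaira--Nakano vanishing on a weakly pseudoconvex K\"ahler manifold requires a careful choice of complete K\"ahler metric and weighted $L^2$-spaces to set up the $\bar\partial$-solvability, and this is presumably what the paper's direct proofs Theorem \ref{q2} ($q\geq 2$) and Theorem \ref{q1} ($q=1$) are designed to handle without the inductive detour. An alternative direct route that I would attempt in parallel is to endow $\mathcal{O}_X(D)\otimes F$ with the singular Hermitian metric $(h_0/|\sigma_D|^{2(1-\delta)})\otimes h_F$ for small $\delta\in(0,1)$, whose curvature current
\[ \delta\, i\Theta_{h_0}+2\pi(1-\delta)\,[D]+i\Theta_F \]
is strictly positive (the bounded term $\delta\, i\Theta_{h_0}$ is absorbed by $i\Theta_F$ for $\delta$ small) and whose multiplier ideal sheaf is trivial (because $D$ is SNC and $1-\delta<1$); one then invokes the Demailly--Nadel vanishing on weakly pseudoconvex K\"ahler manifolds, with the delicate $q=1$ case possibly calling for the additional strength of the Ohsawa--Takegoshi extension theorem.
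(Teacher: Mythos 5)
Your main argument---Norimatsu's induction on the number of components of $D$ via the restriction sequence $0\to\mathcal{O}_X(D-D_k)\to\mathcal{O}_X(D)\to\mathcal{O}_X(D)|_{D_k}\to0$ twisted by $K_X\otimes F$, together with adjunction---is correct for the $p=n$ statement and is genuinely different from the paper's proof. The paper never uses the residue sequence: it first establishes the vanishing on each relatively compact sublevel set $X_c$ through an $L^2$ Dolbeault isomorphism for $\Omega^p(\log D)\otimes F$ with respect to a Poincar\'e-type metric (Theorems \ref{L2iso} and \ref{localvan}), and then globalizes by a \v{C}ech/Leray-covering argument for $q\geq2$ (Theorem \ref{q2}) and by a Runge-type $L^2$ approximation of holomorphic $(n,0)$-sections across sublevel sets for $q=1$ (Theorem \ref{q1}). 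In effect the paper re-proves, in logarithmic form, the Nakano--Kazama--Takegoshi machinery for weakly $1$-complete manifolds, which is exactly what your base case imports as a known theorem. Your route is much shorter for Theorem \ref{main2}, but it does not recover the general $(p,q)$ statements (Theorem \ref{localvan}, Corollary \ref{vanofholo}, Corollary \ref{vanofdirect}): the residue sequence for $\Omega^p(\log D)$ with $p<n$ lowers $p$ by one on $D_k$, and the Akizuki--Nakano range $p+q\geq n+1$ is not available as a black box on noncompact weakly pseudoconvex manifolds.

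Three caveats. First, run the induction over the class of K\"ahler manifolds that are either compact or weakly pseudoconvex: with the paper's definition (psh exhaustion with $\sup\phi=+\infty$), a compact component of $D_k$ is not weakly pseudoconvex, so the compact (Kodaira--Norimatsu) case must be included in the induction hypothesis. Second, your justification of the base case is too thin as written: Bochner--Kodaira--Nakano plus H\"ormander only solves $\dbar f=g$ for $g$ with finite global weighted $L^2$ norm, and to kill an arbitrary class in $H^q(X,K_X\otimes F)$ one must first twist the metric by $e^{-\chi(\Phi)}$ with $\chi$ convex increasing and growing fast enough to make the given representative square-integrable; this is precisely Demailly's proof of Nadel vanishing on weakly pseudoconvex K\"ahler manifolds and should be cited as such rather than re-derived in one line. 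Third, your ``alternative direct route'' has a genuine gap in the global setting: for a fixed constant $\delta\in(0,1)$ the error term $\delta\, i\Theta_{h_0}$ need not be absorbed by $i\Theta_F$ near infinity, since the smallest eigenvalue of $i\Theta_F$ relative to $\omega$ may decay while $i\Theta_{h_0}$ is unbounded below, so the curvature current of your singular metric need not dominate $\varepsilon\omega$ on all of $X$. This is exactly why the paper fixes its perturbation constants only on relatively compact sublevel sets and then needs the \v{C}ech and Runge globalization steps. The inductive argument, by contrast, avoids this issue entirely and stands on its own.
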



In Theorem \ref{main2}, there is no need to twist the sheaf $K_X\otimes \mathcal{O}_X(D)\otimes F$ with the multiplier ideal sheaf $\mathcal{I}(D)$ of the divisor $D$. The vanishing of $H^q(X, K_X\otimes \mathcal{O}_X(D)\otimes F \otimes \mathcal{I}(D))$ is a direct consequence of Nadel's vanishing theorem, see \cite[Theorem $5.11$]{Dem12a}. Our approach combines the $L^2$ technique from \cite{HLWY16} with a Runge-type approximation method rooted in \cite{Nakano74, Kazama73, Take81, TaOh81}.

The approach to generalizing the vanishing theorems from compact K\"ahler manifolds to weakly pseudoconvex manifolds involves the study of their relatively compact sublevel subsets. For a weakly pseudoconvex K\"ahler manifold $X$ with a smooth plurisubharmonic exhaustion function $\Phi$ and a sequence of positive real numbers tending to infinity, we consider each sublevel subset $X_c:= \{x\in X: \Phi(x) < c\}$, which is relatively compact in $X$ and constitutes a weakly pseudoconvex manifold itself. Using the $L^2$ technique, we establish the logarithmic vanishing theorem on each sublevel subset. Note that we can also apply Theorem \ref{main-finite} to derive the vanishing theorem on each sublevel subset, as within each relatively compact sublevel manifold, the components of the divisor $D$ are finite.

The collection of these sublevel subsets forms a Leray covering of $X$, and therefore, we can focus on the \v{C}ech cohomology. Through the approximation process, we derive the global vanishing theorem.

One of the motivations for studying cohomology on weakly pseudoconvex K\"ahler manifolds is the ability to investigate the corresponding higher direct image sheaves. As a direct corollary, we obtain
\begin{corollary} [Corollary $4.1$]\label{direct}
Let $f: X \rightarrow S$ be a proper holomorphic morphism from a K\"ahler manifold $X$ onto the reduced complex space $S$. Let $D$ be a simple normal crossing divisor. If $F$ is a positive holomorphic line bundle on $X$, then
$$
 R^q f_{\ast}(\Omega^p_X(\log D)\otimes F) =0 \quad \text{for any} \quad p+q \geq n+1.
$$
\end{corollary}
In Theorem \ref{main2} above, if we replace the positive line bundle with a positive vector bundle in the sense of Griffiths (Nakano), it would be interesting to establish the corresponding vanishing results. Additionally, as Professor Ohsawa pointed out in \cite{Ohsa21}, there might be interest in generalizing the results from \cite{LRW19} and \cite{LWY19} to the weakly pseudoconvex situation.

\noindent\textbf{Acknowledgement}: The author expresses sincere appreciation to Professor Shigeharu Takayama for his guidance and warm encouragement, as well as Professor Sheng Rao for his support. The author is also grateful to the University of Tokyo for the Special Scholarship for International Students (Todai Fellowship).

\section{Preliminaries}
In this section, we introduce some basic definitions and results in complex geometry. Unless stated otherwise, $X$ denotes a complex manifold of dimension $n$. The primary reference is \cite{Dem12}.
\begin{definition} [Chern connection and curvature form of vector bundle]
Let $(E, h)$ be a holomorphic vector bundle on $X$. Corresponding to this metric $h$, there exists the unique Chern connection $D=D_{(E,h)}$, which can be split in a unique way as a sum of a $(1,0)$ and a $(0,1)$ connection, i.e., $D=D'_{(E,h)} + D''_{(E,h)}$.
Furthermore, the $(0,1)$ part of the Chern connection $D''_{(E,h)} =\dbar$. The curvature form is defined to be $\Theta_{E,h} := D^2_{(E,h)}$.
On a coordinate patch $\Omega \subset X$ with complex coordinate $(z_1,\ldots,z_n)$, denote by $(e_1,\ldots,e_r)$ an orthonormal frame of vector bundle $E$ with rank $r$. Set
$$
\xu \Theta_{E,h} = \xu \sum_{1\leq j,k\leq n, 1\leq \lambda,\mu \leq r} c_{jk\lambda\mu} dz_j\wedge d\ov{z}_k \otimes e^{\ast}_{\lambda} \otimes e_{\mu} , \quad c_{jk\mu\lambda}=\ov{c}_{jk\lambda\mu}.
$$
Corresponding to $\xu\Theta_{E,h}$, there is a Hermitian form $\theta_{E,h}$ on $TX\otimes E$ defined by
$$
\theta_{E,h}(\phi,\phi) = \sum_{jk\lambda\mu} c_{jk\lambda\mu}(x) \phi_{j\lambda}\ov{\phi}_{k\mu}, \quad \phi=\sum_{j,\lambda}\phi_{j\lambda}\frac{\partial}{\partial{z_j}}\otimes e_{\lambda} \in TX\otimes E.
$$
\end{definition}

\begin{definition} [Positive vector bundle]
A holomorphic vector bundle $(E,h)$ is said to be
\begin{enumerate}
    \item Nakano positive (resp. Nakano semi-positive) if for every nonzero tensor $\phi \in TX\otimes E$, we have
$$
 \theta_{E,h}(\phi,\phi) >0 \quad(\text{resp.} \geq 0).
$$
  \item Griffiths positive (resp. Griffiths semi-positive) if for every nonzero decomposable tensor $\xi\otimes e \in TX\otimes E$, we have
$$
 \theta_{E,h}(\xi\otimes e, \xi\otimes e) >0 \quad(\text{resp.} \geq 0).
$$
\end{enumerate}
It is clear that Nakano positivity implies Griffiths positivity, and both concepts coincide if $r=1$. In the case of a line bundle, $E$ is simply referred to as positive (or semi-positive).
\end{definition}

\begin{definition} [Singular metric and curvature current on line bundle]
Let $(F,h)$ be a holomorphic line bundle on complex manifold $X$ endowed with possible singular Hermitian metric $h$. For any given trivialization $\theta : F|_{\Omega} \simeq \Omega \times \mathbb{C}$ by
$$ \| \xi \|_h = |\theta(\xi)| e^{-\phi(x)},  \quad x \in \Omega, \xi \in F_x,
$$
where $\phi \in L^1_{loc}(\Omega)$ is a weight function of the metric. The curvature current $\sqrt{-1} \Theta_h(F)$ of $h$ is defined by
$$  \sqrt{-1} \Theta_h(F) = \sqrt{-1} 2\partial \overline{\partial} \phi.
$$
The Levi form $\sqrt{-1}\pa\dbar \phi$ is taken in the sense of distributions and thus the curvature is a $(1,1)$-current but not always a smooth $(1,1)$-form. It is globally defined on $X$ and independent of the choice of trivializations. The curvature $\sqrt{-1} \Theta_h(F)$ of $h$ is said to be positive if $\sqrt{-1} \Theta_h(F) \geq 0$ in the sense of current.
\end{definition}

\begin{definition} [Psh function and quasi-psh function]
A function $u: \Omega \rightarrow [-\infty, \infty)$ defined on a open subset $\Omega \in \mathbb{C}^n$ is called plurisubharmonic (psh, for short) if
\begin{enumerate}
\item $u$ is upper semi-continuous;
\item for every complex line $Q \subset \mathbb{C}^n$, $u|_{\Omega \cap Q}$ is subharmonic on $\Omega \cap Q$.
\end{enumerate}
A quasi-plurisubharmonic (quasi-psh, for short) function is a function $v$ which is locally equal to the sum of a psh function and a smooth function.
\end{definition}

\begin{definition} [Multiplier ideal sheaves]
Let $\phi$ be a quasi-psh function on a complex manifold $X$, the multiplier ideal presheaf $\mathcal{I}'(\phi) \subset \mathcal{O}_X$ is defined by 
$$ \Gamma(U, \mathcal{I}'(\phi)) = \{f \in \mathcal{O}_X(U) :\quad |f|^2e^{-2\phi} \in L^1_{loc}(U) \}
$$
for every open set $U \subset X$. The multiplier ideal sheaf $\mathcal{I}(\phi) \subset \mathcal{O}_X$ is the corresponding sheafification of this presheaf. 
For a line bundle $(F, h)$, if the local weight of the metric $h$ is $\phi$, we denote the multiplier ideal sheaf interchangeably as  $\mathcal{I}(h)$ or $\mathcal{I}(\phi)$.
\end{definition}

We now turn to the introduction of some basic definitions for weakly pseudoconvex manifolds.
\begin{definition} [Weakly pseudoconvex = Weakly $1$-complete]
A function $\phi: X\rightarrow [-\infty, +\infty)$ on a manifold $X$ is said to be exhaustive if all sublevel sets
$$
X_c:=\{x\in X: \phi(x)< c \} \quad c < \sup \phi,
$$
are relatively compact. A complex manifold $X$ is called \emph{weakly pseudoconvex} if there exists a smooth plurisubharmonic exhaustion function $\phi: X\rightarrow \mathbb{R}$ with $\sup \phi = +\infty$. Similarly, a complex manifold $X$ is said to be strongly pseudoconvex if the exhaustion function is smooth and strictly plurisubharmonic.
\end{definition}

\begin{proposition} \cite{Nakano70, Dem12} \label{hatom}
Every weakly pseudoconvex K\"ahler manifold carries a complete K\"ahler metric.
\end{proposition}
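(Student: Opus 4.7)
The plan is to modify the given K\"ahler metric $\omega_0$ on $X$ by adding a nonnegative closed $(1,1)$-form built from the plurisubharmonic exhaustion $\phi$. Pick a smooth convex increasing function $\chi\colon\mathbb{R}\to\mathbb{R}$ (so $\chi'\geq 0$ and $\chi''\geq 0$), and set $\omega := \omega_0 + \sqrt{-1}\,\partial\bar\partial\,\chi(\phi)$. A direct computation gives
$$
\sqrt{-1}\,\partial\bar\partial\,\chi(\phi) \;=\; \chi'(\phi)\,\sqrt{-1}\,\partial\bar\partial\phi \;+\; \chi''(\phi)\,\sqrt{-1}\,\partial\phi\wedge\bar\partial\phi,
$$
and both summands are nonnegative since $\phi$ is psh and $\chi',\chi''\geq 0$; moreover $\partial\bar\partial\chi(\phi)$ is automatically $d$-closed. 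Hence $\omega$ is a K\"ahler metric on $X$ satisfying the pointwise $(1,1)$-form inequality
$$
\omega \;\geq\; \chi''(\phi)\,\sqrt{-1}\,\partial\phi\wedge\bar\partial\phi.
$$

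To extract completeness, I introduce the auxiliary real function $\Psi := F(\phi)$, where $F(s):=\int_{0}^{s}\sqrt{\chi''(\tau)}\,d\tau$. The key estimate is a pointwise bound on $|d\Psi|_{\omega}$: writing the previous inequality as $g \geq \chi''(\phi)\,\partial\phi\,(\partial\phi)^{*}$ at the level of Hermitian matrices, a standard rank-one linear-algebra trick (applying $g \geq vv^{*}$ to the vector $w=g^{-1}v$) yields $\chi''(\phi)\,(\partial\phi)^{*} g^{-1}(\partial\phi)\leq 1$, so that
$$
|d\Psi|_{\omega}^{2} \;=\; 2\,\chi''(\phi)\,|\partial\phi|_{\omega}^{2} \;\leq\; 2.
$$
Hence $\Psi$ is $\sqrt{2}$-Lipschitz with respect to $d_{\omega}$, that is, $|F(\phi(x))-F(\phi(x_{0}))|\leq \sqrt{2}\,d_{\omega}(x_{0},x)$ for all $x_{0},x\in X$.

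Now choose $\chi$ so that $\int^{+\infty}\sqrt{\chi''(\tau)}\,d\tau=+\infty$, equivalently $F(s)\to+\infty$ as $s\to+\infty$; for instance take $\chi$ to coincide with $t\mapsto t^{2}$ for large $t$ (smoothly interpolated so as to remain convex and increasing on all of $\mathbb{R}$), so that $\sqrt{\chi''}$ is eventually a positive constant. With this choice the Lipschitz estimate forces $d_{\omega}(x_{0},x)\to+\infty$ as $\phi(x)\to+\infty$. Any $d_{\omega}$-Cauchy sequence is then $d_{\omega}$-bounded, hence $\phi$-bounded along it by the Lipschitz inequality, hence contained in some relatively compact sublevel set $\overline{X_{c}}$ by exhaustiveness of $\phi$, and therefore admits a convergent subsequence and converges. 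This proves that $(X,\omega)$ is a complete K\"ahler manifold.

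The step I expect to require the most care is the second one, converting the $(1,1)$-form inequality into the scalar Lipschitz bound on $\Psi$: the rank-one trick is elementary, but one has to track the normalization constants relating $\sqrt{-1}\partial\phi\wedge\bar\partial\phi$, the Hermitian matrix $g$, and the Riemannian norm $|d\Psi|_\omega$ in order to arrive at an explicit constant; otherwise the rest reduces to a one-line Lipschitz/length estimate combined with the exhaustion property.
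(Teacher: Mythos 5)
Your proposal is correct and follows essentially the same route as the paper: both add $\sqrt{-1}\,\partial\bar\partial(\chi\circ\phi)$ to the given K\"ahler form, observe that the new metric dominates $\sqrt{-1}\,\partial(\rho\circ\phi)\wedge\bar\partial(\rho\circ\phi)$ with $\rho(t)=\int_0^t\sqrt{\chi''(u)}\,du$, and conclude completeness from $\rho(t)\to+\infty$. The only difference is that you spell out the standard final step (the Lipschitz bound on $F(\phi)$ and the Cauchy-sequence argument) that the paper leaves implicit.
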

\begin{proof}
We show the proof of this because we will use it later. Let $(X, \omega)$ be a K\"ahler manifold with K\"ahler form $\omega$, and $\phi$ be an exhaustive psh function on $X$. Set $\hat{\omega}= \omega + \xu\pa\dbar(\chi\circ\phi)$, where $\chi$ is a smooth convex increasing function. Then
\begin{align*}
\hat{\omega} &= \omega + \xu (\chi'\circ\phi)\pa\dbar\phi + \xu(\chi''\circ\phi)\pa\phi\wedge\dbar\phi \\
&\geq \omega + \xu \pa(\rho\circ\phi)\wedge\dbar(\rho\circ\phi)
\end{align*}
where $\rho=\int_0^t \sqrt{\chi''(u)}du$. We thus have complete metric $\hat{\omega}$ as soon as $\lim_{t\rightarrow+\infty}\rho(t)=+\infty$, i.e.
$$
\int_0^{+\infty} \sqrt{\chi''(u)}du = +\infty.
$$
\end{proof}

The complete K\"ahler metric is crucial in solving the $\dbar$-equation. The next two fundamental theorems regarding the $L^2$-estimate of the $\dbar$-equation are of paramount importance for this paper.

\begin{theorem}[$\dbar$-equation with a metric that is not necessarily complete] \cite[\rm Chapter \rm VIII, Theorem $6.1$]{Dem12} \label{dbareq}
Let $X$ be a complete K\"ahler manifold with a K\"ahler metric $\omega$ which is not necessarily complete. Let $(E,h)$ be a Hermitian vector bundle of rank $r$ over $X$, and assume that the curvature operator $B:=[\xu\Theta_{E,h},\Lambda_\omega]$ is semi-positive definite everywhere on $\wedge^{n,q}T_X^*\otimes E$, for some $q\geq 1$. Then for any form $g\in L^2(X,\wedge^{n,q}T^*_{X}\otimes E)$ satisfying $\bar{\partial}g=0$ and $\int_X\langle B^{-1}g,g\rangle dV_\omega<+\infty$, there exists $f\in L^2(X,\wedge^{n,q-1}T^*_X\otimes E)$ such that $\bar{\partial}f=g$ and
$$\int_X|f|^2dV_\omega\leq \int_X\langle B^{-1}g,g\rangle dV_\omega.$$
\end{theorem}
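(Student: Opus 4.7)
The plan is to derive existence via the classical H\"{o}rmander--Andreotti--Vesentini--Demailly scheme: extract an a priori $L^2$-estimate from the Bochner--Kodaira--Nakano identity on $(X,\omega)$, and then produce $f$ through Hahn--Banach plus the Riesz representation theorem. The completeness hypothesis on $X$ (a condition on the underlying complex manifold, since $\omega$ itself is not assumed complete) enters only in the approximation step needed to apply the identity beyond smooth compactly supported forms.

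First I would recall the Bochner--Kodaira--Nakano identity on the K\"ahler manifold $(X,\omega)$: for any smooth compactly supported $E$-valued $(p,q)$-form $v$,
\[
\|\dbar v\|_\omega^2+\|\dbar^* v\|_\omega^2\;\geq\;\int_X\langle Bv,v\rangle\,dV_\omega,\qquad B=[i\Theta_{E,h},\Lambda_\omega].
\]
Since $B\geq 0$, both sides are nonnegative. The next step is to extend the inequality to every $v\in\mathrm{Dom}(\dbar)\cap\mathrm{Dom}(\dbar^*)$. The existence of \emph{some} complete K\"ahler metric $\widetilde\omega$ on $X$ enters here via the standard device of working with the family $\omega_\epsilon:=\omega+\epsilon\widetilde\omega$, each of which is complete, running the full argument with $\omega_\epsilon$, and then sending $\epsilon\to 0$ using monotonicity of the relevant $L^2$-norms together with a weak-limit extraction. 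For fixed $\omega_\epsilon$ one builds cut-off functions $\chi_\nu$ with $\chi_\nu\uparrow 1$ and $|d\chi_\nu|_{\omega_\epsilon}\to 0$; the commutators $[\dbar,\chi_\nu]$ and $[\dbar^*,\chi_\nu]$ are then zeroth-order multiplication operators whose pointwise operator norms tend to zero, which gives $\chi_\nu v\to v$ in the graph norms of $\dbar$ and $\dbar^*$. This density-and-regularization procedure is where I expect the main technical obstacle: one has to verify carefully that the three operations (BKN, density in graph norms, and $\epsilon\to 0$) can be performed in a compatible order while preserving the bound $\int\langle B^{-1}g,g\rangle\,dV_\omega$, which itself depends on $\omega$ through $\Lambda_\omega$.

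Finally, given $g\in L^2$ with $\dbar g=0$ and $C:=\int_X\langle B^{-1}g,g\rangle\,dV_\omega<\infty$, I set up the Hahn--Banach step as follows. For $v\in\mathrm{Dom}(\dbar^*)$, orthogonally decompose $v=v_1+v_2$ with $v_1\in\ker\dbar$ and $v_2\in(\ker\dbar)^\perp$. Since $g\in\ker\dbar$, $\langle v,g\rangle=\langle v_1,g\rangle$; since $\mathrm{Im}\,\dbar\subset\ker\dbar$ one has $v_2\in(\mathrm{Im}\,\dbar)^\perp$, which yields $v_2\in\mathrm{Dom}(\dbar^*)$ with $\dbar^* v_2=0$, and hence $v_1\in\mathrm{Dom}(\dbar^*)$ with $\dbar^* v_1=\dbar^* v$. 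Cauchy--Schwarz in the $B$-inner product, combined with the extended BKN inequality applied to $v_1\in\ker\dbar\cap\mathrm{Dom}(\dbar^*)$, then gives
\[
|\langle v,g\rangle|^2\;\leq\;C\cdot\langle Bv_1,v_1\rangle\;\leq\;C\cdot\bigl(\|\dbar v_1\|_\omega^2+\|\dbar^* v_1\|_\omega^2\bigr)\;=\;C\cdot\|\dbar^* v\|_\omega^2.
\]
Thus the assignment $\dbar^* v\mapsto\langle v,g\rangle$ is a well-defined bounded linear functional on $\mathrm{Im}\,\dbar^*\subset L^2$ of norm at most $\sqrt{C}$. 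Hahn--Banach extends it to $L^2$, and the Riesz representation theorem produces $f\in L^2(X,\wedge^{p,q-1}T^*_X\otimes E)$ with $\|f\|_\omega^2\leq C$ satisfying $\langle f,\dbar^* v\rangle=\langle v,g\rangle$ for every $v\in\mathrm{Dom}(\dbar^*)$, which is exactly the weak identity $\dbar f=g$.
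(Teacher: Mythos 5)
The paper does not actually prove this statement: it is quoted (with ``positive definite'' relaxed to ``semi-positive definite'') from Demailly's book, so there is no in-paper proof to compare against, and what you have written is a reconstruction of the standard H\"ormander--Andreotti--Vesentini--Demailly scheme from that source. The functional-analytic half of your outline is sound: the decomposition $v=v_1+v_2$ along $\ker\dbar$, the identification $\dbar^*v_1=\dbar^*v$, and the Hahn--Banach/Riesz step are all correct. Two small points there: your displayed inequality $|\langle v,g\rangle|^2\le C\cdot\langle Bv_1,v_1\rangle$ conflates the pointwise Cauchy--Schwarz bound $|\langle g,v_1\rangle|^2\le\langle B^{-1}g,g\rangle\,\langle Bv_1,v_1\rangle$ with its integrated form (the correct right-hand side is $C\int_X\langle Bv_1,v_1\rangle\,dV_\omega$), and since $B$ is only semi-positive you must read $B^{-1}g$ as the minimal preimage, the hypothesis $\int_X\langle B^{-1}g,g\rangle\,dV_\omega<+\infty$ forcing $g(x)\in\operatorname{Im}B(x)$ almost everywhere.

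The genuine gap is the step you yourself flag as ``the main technical obstacle'' and then do not carry out. Your closing chain of inequalities applies the Bochner--Kodaira--Nakano estimate to $v_1\in\ker\dbar\cap\mathrm{Dom}(\dbar^*)$ with all norms taken in $\omega$; but the density of $\mathcal{C}^\infty_0$ forms in the graph norm of $\dbar$ and $\dbar^*$ --- the only mechanism for extending BKN beyond compactly supported forms --- is available only for the complete metrics $\omega_\epsilon=\omega+\epsilon\widetilde\omega$, not for $\omega$ itself. To descend from $\omega_\epsilon$ to $\omega$ one needs the pointwise monotonicity $\langle B_\epsilon^{-1}g,g\rangle\,dV_{\omega_\epsilon}\le\langle B^{-1}g,g\rangle\,dV_\omega$, together with a matching comparison of the $L^2$-norms of the candidate solutions $f_\epsilon$, so that a weak limit exists and retains the stated bound. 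These comparisons are a nontrivial linear-algebra lemma, established in Demailly's argument for forms of bidegree $(n,q)$ (the general $(p,q)$ case requires additional care, e.g.\ a reduction via $\Lambda^{p,q}T^*_X\otimes E\cong\Lambda^{n,q}T^*_X\otimes(\Lambda^{n-p}T_X\otimes E)$), and they are precisely what distinguishes this theorem from the elementary complete-metric case. As written, your argument only establishes the theorem when $\omega$ itself is complete.
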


\begin{theorem}[$\dbar$-equation with complete metric] \cite[Theorem $5.1$]{Dem12a} \label{dbareq-2}
Let $X$ be a complete K\"ahler manifold with a complete K\"ahler metric $\omega$. Let $(E,h)$ be a Hermitian vector bundle of rank $r$ over $X$, and assume that the curvature operator $B:=[\xu\Theta_{E,h},\Lambda_\omega]$ is semi-positive definite everywhere on $\wedge^{p,q}T_X^*\otimes E$, for some $q\geq 1$. Then for any form $g\in L^2(X,\wedge^{p,q}T^*_{X}\otimes E)$ satisfying $\bar{\partial}g=0$ and $\int_X\langle B^{-1}g,g\rangle dV_\omega<+\infty$, there exists $f\in L^2(X,\wedge^{p,q-1}T^*_X\otimes E)$ such that $\bar{\partial}f=g$ and
$$\int_X|f|^2dV_\omega\leq \int_X\langle B^{-1}g,g\rangle dV_\omega.$$
\end{theorem}

\begin{definition} [Holomorphically convex manifold]
A complex manifold $X$ is called \emph{holomorphically convex} if for any compact set $K \subset X$, its holomorphic hull $\hat{K}= \{ x\in X: |f(x)| \leq \underset{K}{\text{sup}}|f| ~\text{for all} ~ f\in \mathcal{O}_X(X) \}$ is compact too.
\end{definition}

\begin{definition}[Stein manifold]
A complex manifold $X$ is called \emph{Stein manifold} if $X$ is holomorphically convex and for any $x,y\in X, x\neq y$, there exists a $f\in \mathcal{O}_X(X)$ with $f(x) \neq f(y)$.
\end{definition}

While every holomorphically convex manifold is weakly pseudoconvex, the converse does not hold. In the case of a holomorphically convex manifold, we have the classical Remmert reduction, which establishes its connection to Stein space.
\begin{remark} [Remmert reduction] \label{remmert}
If $X$ is a holomorphically convex manifold, then by Remmert reduction, there exists a normal Stein space $S$ and a proper, surjective, holomorphic morphism $f: X \rightarrow S$ such that
\begin{enumerate}
    \item $f_{\ast}\mathcal{O}_X = \mathcal{O}_S$,
    \item $f$ has connected fibers,
    \item The map $f^{\ast}: \mathcal{O}_S(S) \rightarrow \mathcal{O}_X(X)$ is an isomorphism,
    \item The pair $(f, S)$ is unique up to biholomorphism.
\end{enumerate}
\end{remark}

\begin{remark} [Sublevel subset of weakly pseudoconvex manifolds] \label{sublevel}
Let $(X, \omega, \Phi)$ be a weakly pseudoconvex K\"ahler manifold with smooth psh exhaustion function $\Phi$. Without loss of generality, we may assume $\Phi$ is positive. For any positive real number $c$, the sublevel set $X_c = \{ x\in X : \Phi(x) < c \}$ is relative compact in $X$ and again pseudoconvex with respect to the exhaustion function $ \Phi_c := \frac{1}{c-\Phi}$. Set $\omega_c:= \omega|_{X_c}$, then $(X_c, \omega_c, \Phi)$ is again a weakly pseudoconvex K\"ahler manifold, and thus we have an exhaustion sequence of pseudoconvex sublevel set $(X_c, \omega_c, \Phi_c)$. Let $D= \sum_{i\in \Lambda} D_i$ be the simple normal crossing divisor on $X$, here the index set $\Lambda$ may be infinite. Set $\Lambda_c := \{i \in \Lambda : D_i \cap X_c \neq \emptyset\}$ and $D_c := \sum_{i\in \Lambda_c} D_i$, it follows that $\Lambda_c$ is a finite set and hence $D_c$ has only finitely many components.
\end{remark}

Now we introduce some basic properties of the sheaf of logarithmic differential forms and the logarithmic integrable connections on complex manifolds were developed by Deligne in \cite{De70}, Esnault and Viehweg in \cite{EsVi86} studied the relations between logarithmic de Rham complexes and vanishing theorems on projective manifolds.
Let $X$ be a complex manifold and $D$ be a simple normal crossing divisor on it, i.e., $D=\sum_{i} D_i$, where each $D_i$ are distinct smooth hypersurfaces intersecting transversely in $X$. That means at each point $x\in X$, there are at most $n$ divisors $D_i$ passing through $x$. In particular, given $x$, one can find complex analytic coordinates $(z_1,\ldots,z_n)$ in a neighborhood $U$ of $x$, such that the local equation of $D\cap U$ in $U$ is $z_1\cdots z_s=0$, $s$ depending on $x$. Hence even if the number of components of $D$ is infinite, we can still define its logarithmic forms. A logarithmic differential $(k,0)$-form $\alpha$, in a sufficiently small neighborhood $U$ of any $x\in D$ can be written as
$$
\alpha = \sum \alpha_{I,J} \frac{dz_{i_1}}{z_{i_1}}\wedge \cdots \wedge \frac{dz_{i_l}}{z_{i_l}} \wedge dz_{j_1}\wedge \cdots \wedge dz_{j_m}.
$$
Here $I =\{i_1,\ldots, i_l\} \subseteqq \{1,\ldots,s\}, J=\{j_1,\ldots,j_m\}\subseteqq \{s+1,\ldots,n\}$ and $l+m=k$, all $\alpha_{I,J}$ are smooth functions on $U$.
We are mainly interested in those holomorphic logarithmic forms, i.e., all $\alpha_{I, J}$ are holomorphic functions. The sheaf of germs of holomorphic $p$-forms on $X$ with at most logarithmic poles along $D$, we denote it by $\Omega^p_{X}(\log D)$. 
On any open submanifold $X_c$, by the definition of logarithmic forms, we know the restriction $\Omega_X^p(\log D)|_{X_c} = \Omega^p_{X_c}(\log D_c)$.


\begin{definition} [Poincar\'e type metric] \label{Poincare type metric}
On complex manifold $X$ with simple normal divisor $D$, we denote the complement of $D$ by $Y$. We can select a local coordinate chart $(W; z_1, \ldots, z_n)$ of $X$ such that the locus of $D$ is given by $z_1\cdots z_s =0$ and $Y\cap W \simeq (\Delta^{\ast})^s \times (\Delta)^{n-s}$ where $\Delta$ (respectively $\Delta^{\ast}$) represents the open disk (respectively punctured) in the complex plane. We say that the metric $\omega$ on $Y$ is of Poincar\'e type along $D$, if for each local coordinate chart the restriction $\omega|_{Y\cap W}$ is equivalent to the usual Poincar\'e type metric $\omega_P$ defined by
\begin{equation}
\omega_P = \sqrt{-1} \sum_{j=1}^s \frac{dz_j \wedge d \ov{z}_j}{|z_j|^2\cdot \log^2|z_j|^2} + \sqrt{-1} \sum_{j=s+1}^n dz_j \wedge d \ov{z}_j.
\end{equation}
Here the equivalence means two metrics are mutually bounded.
\end{definition}

\begin{proof} [The proof of Theorem \ref{main-finite}]
We close this section by providing the proof of Theorem \ref{main-finite}, demonstrating the logarithmic vanishing theorem in the case where the number of irreducible components of the divisor is finite. The proof is similar to that in \cite{Norimatsu78}.
We consider the weakly pseudoconvex K\"ahler manifold $X$, and the divisor $D$ of normal crossing type, written as $D = D_1 + \cdots + D_N$. We define for any ordered multi-index $I =(i_1, \ldots, i_q)$ in $(1, \ldots, N)$
$$
D_I = D_{i_1} \cap \cdots \cap D_{i_q}
$$
and
$$
D^{[q]} = \coprod_{|I|=q} D_I, \quad~~D^{[0]} = X.
$$
Here the sign $\coprod$ means the disjoint union. It's important to note that in finite case all $D^{[q]}$ are manifolds (not necessarily connected), moreover, they are weakly pseudoconvex K\"ahler manifolds as well.
Following Deligne, we define the weight filtration $W_{\bullet}$ of $\Omega^p_X(\log D)$ by $W^p_k := W_k \Omega^p_X(\log D) := \Omega^k_X(\log D)\wedge \Omega^{p-k}_X$. We now define the $k$-residue $\Res_k$ of holomorphic logarithmic forms in $W^p_k$, for any $p$ logarithmic form $\alpha\in \Gamma(X, W_k\Omega_X^p(\log D))$, write
$$\alpha = \sum \alpha_I \wedge (\frac{dz}{z})^I,$$
where $I=(i_1, \ldots, i_k)$, and $(\frac{dz}{z})^I = \frac{dz_{i_1}}{z_{i_1}}\wedge\cdots\wedge\frac{dz_{i_k}}{z_{i_k}}$.
We set $\Res_k \alpha \in \Gamma(D^{[k]}, \Omega_{D^{[k]}}^{p-k})$ as
$$
(\Res_k \alpha)|_{D_I} := (\alpha_I)|_{D_I}.
$$

By the direct computation, we know $\Res_k \beta =0$ if $\beta\in W_{k-1}$. Moreover one has the next exact sequence ($p\geq k$):
\begin{align*}
0 \longrightarrow W^p_{k-1} \longrightarrow W^p_k \stackrel{\Res_k}\longrightarrow \Omega^{p-k}_{D^{[k]}} \longrightarrow 0.
\end{align*}
We can tensor the exact sequence with a line bundle $F$ and still keep the exactness, then obtain the long exact sequence of cohomology groups:
\begin{align*}
\cdots \longrightarrow H^q(X, W^p_{k-1}\otimes F) \longrightarrow H^q(X, W^p_k \otimes F) \longrightarrow H^q(D^{[k]}, \Omega^{p-k}_{D^{[k]}}\otimes F) \longrightarrow \cdots.
\end{align*}

Note that $F|_{D^{[k]}}$ is positive line bundle if $F$ is positive on $X$. According to the classic Akizuki--Nakano vanishing theorem on weakly pseudoconvex K\"ahler manifold, see for example \cite{Nakano73, Nakano74}, we know $H^q(D^{[k]}, \Omega^{p-k}_{D^{[k]}}\otimes F)=0$ for $p+q \geq n+1$ (note that $\dim_{\mathbb{C}}D^{[k]} = n-k$). Hence the natural morphism $H^q(X, W^p_{k-1}\otimes F) \rightarrow H^q(X, W^p_k \otimes F)$ is surjective for $p+q\geq n+1$.
We initiate the process with $k=p$, and then by successively replacing $k$ with $k-1$ and continuing this iteration, we obtain a surjective morphism $H^q(X, \Omega^p_X \otimes F) = H^q(X, W^p_0 \otimes F) \twoheadrightarrow H^q(X, W^p_p \otimes F) =H^q(X, \Omega^p_X (\log D)\otimes F)$. Utilizing the Akizuki--Nakano vanishing theorem, we deduce that $H^q(X, \Omega^p \otimes F)=0$. Consequently, for any $p+q \geq n+1$, we have $H^q(X, \Omega^p_X (\log D)\otimes F)=0$.
\end{proof}

\section{Analytic proof of the local vanishing theorem}
Consider $(X, \omega, \Phi)$ as a weakly pseudoconvex K\"ahler manifold with a smooth psh exhaustion function $\Phi$, as detailed in Remark \ref{sublevel}. For any positive real number $c$, the sublevel manifold is denoted as $X_c = \{x\in X: \Phi(x) < c \}$. A direct corollary of Theorem \ref{main-finite} is the following vanishing theorems on each sublevel manifold $X_c$, due to the finiteness of components of $D$ in each $X_c$.

\begin{theorem} \label{localvan-1}
Let $(F, h^F)$ be a positive holomorphic line bundle on an $n$-dimensional weakly pseudoconvex K\"ahler manifold $X$. For each real number $c$ and on the corresponding sublevel manifold $X_c$, we have the vanishing of cohomology groups,
$$
H^{q}(X_c, \Omega_X^{p}(\log D)\otimes F) = 0 \quad \text{for any} \quad p+q \geq n+1.
$$
\end{theorem}

In this section, we employ the $L^2$ method to establish the theorem mentioned above, specifically Theorem \ref{localvan}. We believe it is worthwhile to incorporate $L^2$ techniques in this type of vanishing problem. We begin by constructing a Poincar\'e type metric on $ Y_c:= X_c - D$.

%

\begin{definition}[Incomplete Poincar\'e type K\"ahler metric on $X_c$] \label{pointype}
Let $D= \sum_i D_i$ be a simple normal crossing divisor of $X$, and $\sigma_i$ be the defining section of $D_i$. Fix any smooth Hermitian metrics $\| \cdot\|_i$ on $\mathcal{O}(D_i)$ such that $\|\sigma_i \|_i < 1$ on $X$ and $\|\sigma_i\|^2_i = \frac{1}{e}$ on a little bit away from $D_i$. Similar to \cite{Zucker79}, we set $\omega_{c,p} := (k_c \omega_c - \frac{1}{2} \xu \sum_i \pa \dbar \log \log^2 \|\sigma_i \|^2_i)$ for large positive integer $k_c$ which depends on $X_c$. By our assumption, the definition of $\omega_{c,p}$ makes sense since around any point on $X$, there are only finite non-zero terms in the sum $\frac{1}{2} \xu \sum_i \pa \dbar \log \log^2 \|\sigma_i \|^2_i$. In coordinates patch $W$ where $D_i$ is defined by $z_i=0$, and $\| \sigma_i \|^2_i = |z_i|^2 e^u$ for some function $u$ that is smooth on $W$. Then
$$
-\frac{1}{2} \pa \dbar \log \log^2 \|\sigma_i \|^2_i = \frac{1}{(\log|z_i|^2 + u)^2}(\frac{dz_i}{z_i}+\pa u) \wedge (\frac{d\ov{z}_i}{\ov{z}_i} + \dbar u) - \frac{1}{\log|z_i|^2 + u}\pa\dbar u.
$$
It is clear that $\omega_{c,p}$ is positive $(1,1)$-form on $Y_c$ and of Poincar\'e type along $D$ provided $k_c$ is sufficiently large as in Definition \ref{Poincare type metric}. But it is obvious that $\omega_{c,p}$ is not complete along the boundary of $Y_c$.
\end{definition}

Now, we will follow Huang--Liu--Wan--Yang's approach in \cite{HLWY16} to obtain the $L^2$ resolution.



\begin{definition} \label{finesheaf}
Let $(X_c, \omega_c)$ denote a sublevel manifold, and let $h^F_{Y_c}$ represent the restriction Hermitian metric of $h^F$ on $F_{|{Y_c}}$.
The sheaf $\Xi^{p,q}_{(2)}(X_c, F, \omega_{c,p}, h^F_{Y_c})$ over $X_c$ is defined as follows: On any open subset $U$ of $X_c$, its section space over $U$ consists of $F$-valued $(p,q)$-forms $u$ with measurable coefficients such that the $L^2$ norms of both $u$ and $\dbar u$ are integrable on any compact subset $K$ of $U$. Here, integrability implies that both $|u|^2_{\omega_{c,p}\otimes h^F_{Y_c}}$ and $|\dbar u|^2_{\omega_{c,p}\otimes h^F_{Y_c}}$ are integrable on $K\backslash D$.

Recall that a sheaf $\mathcal{F}$ on complex $X$ is termed a fine sheaf if it admits a partition of the unit, i.e., for any locally finite open covering $\{U_i\}$ of $X$, there exists a family of smooth functions $\{f_i\}$ satisfying:
\begin{enumerate}
\item supp $f_i \subset U_i$,
\item $\sum_i f_i = 1$ on $X$.
\end{enumerate}
In the context of our case $(X_c, \omega_c)$, if the metric $\omega_{c,p}$ conforms to the Poincar\'e type as described in Definition \ref{pointype}, it is complete along the divisor $D$ and possesses finite volume, as demonstrated, for instance, in \cite[Proposition 3.4]{Zucker79}. Consequently, the sheaf $\Xi^{p,q}_{(2)}(X_c, F, \omega_{c,p}, h^F_{Y_c})$ qualifies as a fine sheaf on $X_c$. Note that it may not be a fine sheaf on $Y_c=X_c\backslash D$.
\end{definition}

\begin{theorem} [An $L^2$-type Dolbeault isomorphism] \cite[Theorem 3.1]{HLWY16}\label{L2iso}
Let $(X, \omega)$ be a weakly pseudoconvex K\"ahler manifold of dimension $n$, and let $D=\sum_{i}D_i$ be a simple normal crossing divisor in $X$. For a fixed real number $c$, let $X_c$ denote the associated sublevel manifold. We can construct a K\"ahler metric $\omega_{c,p}$ on $Y_c$ which is of Poincar\'e type along $D$ as described in Definition \ref{pointype}. For a line bundle $(F,h^F)$, there exists a Hermitian metric $h^F_{Y_c,\alpha_c}$ on $F_{|{Y_c}}$ such that the sheaf $\Omega^p_{X}(\log D)\otimes F$ over $X_c$ admits a fine resolution given by the $L^2$ Dolbeault complex $(\Xi^{p,\ast}_{(2)}(X_c, F, \omega_{c,p}, h^F_{Y_c,\alpha_c}), \dbar)$, where $\alpha_c$ is a (to be determined) large positive constant dependent on $X_c$. This implies the existence of an exact sequence of sheaves over $X_c$:
\begin{equation} \label{complex-3.4}
0 \rightarrow \Omega^p_{X_c}(\log D_c)\otimes F \rightarrow \Xi^{p,0}_{(2)}(X_c, F, \omega_{c,p}, h^F_{Y_c,\alpha_c}) \rightarrow \Xi^{p,1}_{(2)}(X_c, F, \omega_{c,p}, h^F_{Y_c,\alpha_c}) \rightarrow \ldots
\end{equation}
such that $\Xi^{p,q}_{(2)}(X_c, F, \omega_{c,p}, h^F_{Y_c,\alpha_c})$ is a fine sheaf for each $0\leq p,q \leq n$. In particular, by Dolbeault's isomorphism, we have:
\begin{equation}
H^q(X_c,\Omega^p_{X_c}(\log D_c)\otimes F) \simeq H^{p,q}_{(2)}(Y_c,F,\omega_{c,p},h^F_{Y_c,\alpha_c}).
\end{equation}

Here, $H^{p,\ast}_{(2)}(Y_c, F,\omega_{c,p},h^F_{Y_c,\alpha_c})$ denotes the cohomology associated with the global sections of the complex $\Xi^{p,\ast}_{(2)}:= (\Xi^{p,\ast}_{(2)}(X_c, F, \omega_{c,p}, h^F_{Y_c,\alpha_c}), \dbar)$ on $X_c$, this is to say, the cohomology of the following complex:
\begin{equation*}
 0\rightarrow \Gamma(X_c, \Xi^{p,0}_{(2)}) \rightarrow \Gamma(X_c, \Xi^{p,1}_{(2)}) \rightarrow \ldots \rightarrow \Gamma(X_c, \Xi^{p,n}_{(2)})\rightarrow 0.
\end{equation*}

\end{theorem}

\begin{proof}
The proof comes from \cite[Theorem 3.1]{HLWY16}, We provide the construction of the metric  $h^F_{Y_c,\alpha_c}$ on $F_{|{Y_c}}$ that renders the complex \eqref{complex-3.4} exact.
To begin, for any fixed constants $\tau_i \in (0,1]$, we establish a smooth Hermitian metric on $F|_{Y_c}$ as follows:
$$
h^F_{Y_c, \alpha_c} := \prod_{i\in \Lambda_c} \|\sigma_i \|^{2\tau_i}_i (\log^2 \|\sigma_i \|^2_i)^{\frac{\alpha_c}{2}} h^F,
$$
where $\alpha_c$ is a large positive constant to be determined. Under Definition \ref{finesheaf}, it suffices to verify the exactness of the complex \eqref{complex-3.4}. For the convenience of the reader, we present the proof of exactness solely at $q=0$ because I think this part can help the reader understand the $L^2$ resolution of logarithmic forms here. Those interested in the complete proof may refer to \cite{HLWY16}.

Let $(W;z_1,\ldots,z_n)$ be a local coordinate chart of $X_c$ along $D$, where $D=\{z_1\cdots z_t=0\}$. Let $e$ be a trivialization section of $F$ on $W$ such that
$\frac{1}{2}\leq|e(z)|_{h^F}\leq1$ over $W$. Let's define the logarithmic coordinate as follows:
\begin{align*}
  \zeta_j&=\frac{1}{z_j}dz_j, \quad \text{for}\quad 1\leq j\leq t, \\
   \zeta_j&=dz_j, \quad \text{for}\quad t+1\leq j\leq n.
\end{align*}
Suppose $\sigma$ is a holomorphic section of $\Xi^{p,0}_{(2)}(X_c, F, \omega_{c,p},  h^F_{Y_c,\alpha_c})$. Then, we can express $\sigma$ as:
\begin{equation}
  \sigma(z)=\sum_{|I|=p}\sigma_I(z)\zeta_{i_1}\wedge\cdot\cdot\cdot\wedge\zeta_{i_p}\otimes e\nonumber
\end{equation}
where $I=(i_1,\ldots,i_p)$ is a multi-index with
$i_1<\cdot\cdot\cdot<i_p$ and $\sigma_I(z)$ is a holomorphic
function on $W_{1/2}^{\ast}$. By definition, on
$W_r^{*} := \Delta_{r}^{*t}\times\Delta_{r}^{n-t}\subset
W_{1/2}^{*}$ for any $0<r<1/2$, we see that $\sigma$ is integrable in the $L^2$ sense with respect to $\omega_{c,p}$ and $h^F_{Y_c,\alpha_c}$.

Note that the Hermitian metric
$h^{F}_{Y_c, \alpha_c}|_{W_{1/2}^{*}}$ is equivalent to the following
Hermitian metric
\begin{equation*}
  h^{F}_{\alpha_c}(W_{1/2}^{*})=\prod_{i=1}^t|z_i|^{2\tau_i}(\log^2|z_i|^{2})^{\frac{\alpha_c}{2}}h^F.
\end{equation*}
If we denote $\{i_1,\ldots,i_p\}\cap\{1,\ldots,t\}=\{i_{p1},\ldots,i_{pb}\}$,
then one can write
\begin{align}
  \|\sigma\|_{L^2(W_r^{*})}^2=\sum_{|I|=p}\int_{W_r^{*}} |e|_{h^L}^2
  \left(|\sigma_I(z)|^2 \prod_{\nu=1}^b\log^2|z_{i_{p\nu}}|^2
  \prod_{i=1}^t|z_i|^{2\tau_i}(\log^2|z_i|^{2})^{\frac{\alpha_c}{2}}
  \right)\omega_{c,p}^{n}.\label{integration}
\end{align}
Note that the Poincar\'e metric $\omega_{c,p}$ is equivalent to the following
\begin{equation*}
    \omega_{c,p} = \sqrt{-1} \sum_{j=1}^t \frac{dz_j \wedge d \ov{z}_j}{|z_j|^2\cdot \log^2|z_j|^2} + \sqrt{-1} \sum_{j=t+1}^n dz_j \wedge d \ov{z}_j.
\end{equation*}
Let's assume that the Laurent series representation of $\sigma_I(z)$ on $W_{1/2}^{*}$ is given by:
\begin{equation}
  \sigma_I(z)=\sum_{\beta=-\infty}^{\infty}\sigma_{I\beta}(z_{t+1},\ldots,z_n)z_1^{\beta_1}\cdot\cdot\cdot z_t^{\beta_t}, \beta=(\beta_1,\ldots,\beta_t)\in \mathbb Z^t\nonumber
\end{equation}
where $\sigma_{I\beta}(z_{t+1},\ldots,z_n)$ is a holomorphic function
on $\Delta_{1/2}^{n-t}$. 

Using polar coordinates and Fubini's theorem, we ascertain that $\sigma$ is $L^2$ integrable on $W_r^{\ast}$ if and only if each $\beta_i > -\tau_i$ along $D_i$. Since $\tau_i \in (0,1]$, we deduce that $\beta_i \geq 0$, and thus, $\sigma_I(z)$ has a removable singularity. Consequently, both $\sigma$ and $\nabla \sigma$ possess only logarithmic poles, and $\sigma$ becomes a section of $\Omega^{p}(\log D)\otimes F$ on $W$. Conversely, if we select $\sigma$ as a holomorphic section of $\Omega^{p}(\log D)\otimes F$ on $W$, it's straightforward to verify, using formula (\ref{integration}), that $\sigma$ is $L^2$ integrable on $W_r^{\ast}$ for any $0 < r < \frac{1}{2}$. Thus, we have demonstrated the exactness of the complex at $\Xi^{p,0}_{(2)}(X_c, F, \omega_{c,p}, h^F_{Y_c,\alpha_c})$ for any $\alpha_c > 0$.
\end{proof}

Even though $\omega_{c,p}$ is not complete, $Y_c$ admits a complete K\"ahler metric. Indeed, let $\tilde{\omega}=\hat{\omega}+\omega_{c,p}$, here $\hat{\omega}$ is complete along the boundary of $X_c$ like in the Proposition \ref{hatom}. We know that $\tilde{\omega}$ is complete on $Y_c$.
Hence we can still solve the certain $\dbar$-equation on $Y_c$ thanks to Theorem \ref{dbareq} when $p=n$. We slightly modify Huang--Liu--Wan--Yang's approach in \cite{HLWY16} to get the local vanishing.

\begin{theorem} \label{localvan}
Let $(F, h^F)$ be a positive holomorphic line bundle on an $n$-dimensional weakly pseudoconvex K\"ahler manifold $X$. For each positive real number $c$ and on the corresponding sublevel manifold $X_c$, we observe the vanishing of cohomology groups,
$$
H^{q}(X_c, \Omega_X^{p}(\log D)\otimes F) = 0 \quad \text{for any} \quad p+q \geq n+1.
$$
\end{theorem}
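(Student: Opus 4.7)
The plan is to apply the $L^2$-type Dolbeault isomorphism of Theorem \ref{L2iso} to reduce the desired vanishing to the vanishing of the $L^2$ Dolbeault cohomology $H^{p,q}_{(2)}(Y_c, F, \omega_{c,p}, h^F_{Y_c,\alpha_c})$, and then to solve the $\dbar$-equation on $Y_c$ using Theorem \ref{dbareq}. Although $\omega_{c,p}$ is not complete along $\pa X_c$, the discussion preceding the theorem exhibits the complete K\"ahler metric $\tilde{\omega}=\hat{\omega}+\omega_{c,p}$ on $Y_c$, so the hypothesis of Theorem \ref{dbareq} will be available with respect to $\tilde\omega$; the standard Demailly trick of rescaling $\hat\omega \mapsto \varepsilon\hat\omega$ and passing to the weak limit as $\varepsilon\to 0$ will transfer the solution back into $L^2(\omega_{c,p}, h^F_{Y_c,\alpha_c})$.

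Concretely, I would start with a $\dbar$-closed $F$-valued $(p,q)$-form $g$ on $Y_c$ which is $L^2$ with respect to $(\omega_{c,p}, h^F_{Y_c,\alpha_c})$, where $p+q\geq n+1$. The key analytic input is the Nakano inequality: provided the curvature operator
$$
B_{\alpha_c} := [\xu\Theta_{F, h^F_{Y_c,\alpha_c}}, \Lambda_{\omega_{c,p}}]
$$
is bounded below by some $\delta>0$ on $\wedge^{p,q}T^*_{Y_c}\otimes F$, one has $\int_{Y_c}\langle B^{-1}_{\alpha_c}g, g\rangle\,dV_{\omega_{c,p}} \le \delta^{-1}\|g\|^2 <+\infty$, and Theorem \ref{dbareq} applied with the complete metric $\tilde\omega_\varepsilon:=\varepsilon\hat\omega+\omega_{c,p}$ (followed by $\varepsilon\to 0$) produces $f$ with $\dbar f=g$ and with uniformly controlled $L^2$ norm. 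Passing to sheaf cohomology via Theorem \ref{L2iso} then yields the claimed vanishing on $X_c$.

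The technical core is therefore the curvature calculation that ensures strict positivity of $B_{\alpha_c}$. By the very definition of $h^F_{Y_c,\alpha_c}$ in the proof of Theorem \ref{L2iso},
$$
\xu\Theta_{F,h^F_{Y_c,\alpha_c}} = \xu\Theta_{F,h^F} - \sum_i \tau_i \xu\Theta(\mathcal{O}(D_i),\|\cdot\|_i) + \tfrac{\alpha_c}{2}\sum_i \xu\pa\dbar \log\log^2\|\sigma_i\|_i^2.
$$
The first term is strictly positive because $F$ is positive; the third term, by the local expansion in Definition \ref{pointype}, produces after contraction with $\omega_{c,p}$ a large positive multiple of the Poincar\'e component; the second term is bounded on $X_c$. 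For $k_c$ and $\alpha_c$ sufficiently large, the commutator $[\xu\Theta_{F,h^F_{Y_c,\alpha_c}}, \Lambda_{\omega_{c,p}}]$ is then $\succeq \delta\,\mathrm{Id}$ on $(p,q)$-forms with $p+q\geq n+1$, exactly as in \cite{HLWY16}.

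The main obstacle I anticipate is controlling the curvature in the Nakano (not merely Griffiths) sense near $D$: the logarithmic weight contributes curvature that blows up like $1/(|z_j|^2\log^2|z_j|^2)$, and one must verify that after taking the commutator with $\Lambda_{\omega_{c,p}}$, whose components along $D$ carry exactly the same singularity, the leading terms combine to give a strictly positive bound uniformly up to $\pa X_c\cup D$. The condition $p+q\geq n+1$ enters precisely at this step via the Akizuki--Nakano commutator identity. Once this local estimate is secured, the remaining steps, namely the $\varepsilon\to 0$ limit of Demailly's theorem and the identification via Theorem \ref{L2iso}, are formal.
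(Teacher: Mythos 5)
Your global architecture --- reduce to the $L^2$ Dolbeault cohomology $H^{p,q}_{(2)}$ via Theorem \ref{L2iso}, then solve the $\dbar$-equation on $Y_c$ with Theorem \ref{dbareq}, using the fact that $Y_c$ carries the complete K\"ahler metric $\tilde\omega=\hat\omega+\omega_{c,p}$ --- is the paper's. (The $\varepsilon\to0$ rescaling of $\hat\omega$ is already packaged inside Theorem \ref{dbareq}, which permits the reference metric to be incomplete, so that step is redundant but harmless.) The genuine gap is at what you yourself call the technical core. You propose to bound $B_{\alpha_c}=[\xu\Theta(F,h^F_{Y_c,\alpha_c}),\Lambda_{\omega_{c,p}}]$ below by $\delta>0$ on $(p,q)$-forms with respect to the \emph{fixed} Poincar\'e metric $\omega_{c,p}=k_c\omega_c-\tfrac12\sum\pa\dbar\log\log^2\|\sigma_i\|_i^2$, ``for $k_c$ and $\alpha_c$ sufficiently large.'' This fails. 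Diagonalizing $\xu\Theta(F,h^F_{Y_c,\alpha_c})$ against $\omega_{c,p}$ near $D_i$, the normal eigenvalue tends to $\alpha_c$ (both forms carry the same singular term $\xu\,dz\wedge d\ov z/(|z|^2\log^2|z|^2)$, with coefficients $\alpha_c$ and $1$), while the tangential eigenvalues are of order $\lambda_j(\xu\Theta_{F,h^F})/k_c$. The commutator acts diagonally on a component $u_{I,J}$ with eigenvalue $\sum_{j\in J}\gamma_j-\sum_{j\notin I}\gamma_j$; whenever $p<n$ and $q<n$ the huge eigenvalue $\gamma\approx\alpha_c$ can be placed in the negative sum, so $B_{\alpha_c}$ is not even semipositive near $D$ (as Theorem \ref{dbareq} requires), and increasing $\alpha_c$ or $k_c$ makes this \emph{worse}. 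This is the same reason the classical Akizuki--Nakano theorem cannot be proved against an arbitrary K\"ahler metric when $p<n$: one needs the curvature eigenvalues to be nearly equal.

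The missing idea is the adapted metric. The paper first perturbs the weight to $h_{\alpha,\epsilon,\tau}=\prod_i\|\sigma_i\|_i^{2\tau_i}(\log^2(\epsilon\|\sigma_i\|_i^2))^{\alpha/2}h^F$, choosing $\tau_i,\epsilon$ small so that $\xu\Theta(F,h_{\alpha,\epsilon,\tau})\geq\xu\Theta(F,h^F)-\delta\omega_c>0$ with $\delta=c_0/(8n-1)$, and then takes as K\"ahler metric on $Y_c$ not $\omega_{c,p}$ but
$$
\omega_{Y_c}:=\xu\Theta(F,h_{\alpha,\epsilon,\tau})+2\delta\omega_c,
$$
which is still of Poincar\'e type along $D$, so Theorem \ref{L2iso} applies to it. With respect to $\omega_{Y_c}$ every eigenvalue of the curvature equals $\lambda/(\lambda+2\delta)\in[1-\tfrac{1}{4n},1)$, whence $\sum_{j\in J}\gamma_j-\sum_{j\notin I}\gamma_j\geq q(1-\tfrac{1}{4n})-(n-p)\geq\tfrac12$ for $p+q\geq n+1$. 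Without this equalization of eigenvalues (or some substitute for it), your argument only covers $p=n$, where the negative sum is empty.
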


\begin{proof}
Let $\omega_c$ be a fixed K\"ahler metric on $X_c$. We denote $\{\lambda^j_{\omega_c}(h^F)\}_{j=1}^n$ as the increasing sequence of eigenvalues of $\xu\Theta(F,h^F)$ with respect to $\omega_c$. As $X_c$ is relatively compact in $X$, there exists a positive constant $c_0$ such that the smallest eigenvalue $\lambda^1_{\omega_c}(h^F) \geq c_0$ everywhere on $X_c$. Now, we proceed to construct a new metric on $F|_{Y_c}$ as follows:
$$
 h_{\alpha,\epsilon,\tau} := \prod_{i\in \Lambda_c} \|\sigma_i \|^{2\tau_i}_i (\log^2(\epsilon \|\sigma_i \|^2_i))^{\frac{\alpha}{2}}\cdot h^F \cdot e^{-\chi\circ\phi}.
$$
Here, the constant $\alpha >0$ is chosen to be sufficiently large to satisfy the condition in Theorem \ref{L2iso}, while the constants $\tau_i$ and $\epsilon$, both in the interval $(0,1]$, are to be determined later and depend on $X_c$. Note that here $\chi\circ\phi$ stems from Proposition \ref{hatom}, which ensures that the metric we will soon construct becomes complete. On $Y_c$, straightforward computation yields:
\begin{align} \label{curvature1}
    \xu \Theta(F, h_{\alpha,\epsilon,\tau}) =  &\xu \Theta(F,h^F) + \sum_{i\in \Lambda_c} \tau_i c_1(D_i) + \xu \pa\dbar(\chi\circ\phi) \\
  \nonumber  &+ \sum_{i\in \Lambda_c} \frac{\alpha c_1(D_i)}{\log(\epsilon\|\sigma_i\|^2_i)} + \xu \sum_{i\in \Lambda_c} \frac{\alpha \pa \log\|\sigma_i\|^2_i \wedge \dbar \log\|\sigma_i\|^2_i}{(\log(\epsilon\|\sigma_i\|^2_i))^2}.
\end{align}
The third term and the final term on the right-hand side are a semi-positive $(1,1)$-form. Let's set $\delta = \frac{c_0}{8n-1}$. Then, we can select $\tau_i$ and $\epsilon$ to be sufficiently small such that
\begin{equation} \label{curvature2}
    -\frac{\delta}{2}\omega_c \leq \sum_{i\in \Lambda_c} \tau_i c_1(D_i) \leq \frac{\delta}{2}\omega_c, \quad \quad -\frac{\delta}{2}\omega_c  \leq  \sum_{i\in \Lambda_c} \frac{\alpha c_1(D_i)}{\log(\epsilon\|\sigma_i\|^2_i)}    \leq   \frac{\delta}{2}\omega_c,
    \end{equation}
hold on $Y_c$. The constants $\tau_i$ and $\epsilon$ are thus fixed. We set
\begin{equation*}
    \omega_{Y_c} := \xu\Theta(F, h_{\alpha,\epsilon,\tau}) + 2\delta \omega_c.
\end{equation*}
From formula \eqref{curvature1}, we observe that $\omega_{Y_c}$ is a Poincar\'e type K\"ahler form on $Y_c$ along $D$ (Note that $\chi\circ\phi$ is smooth along $D$, hence innocuous). Furthermore, it's clear from formula \eqref{curvature2} that we have:
\begin{equation} \label{curvature3}
     \xu \Theta(F, h_{\alpha,\epsilon,\tau}) \geq \xu \Theta(F,h^F) -\delta \omega_c
\end{equation}
on $Y_{c}$. Since $ \xu \Theta(F,h^F)$ is a positive $(1,1)$-form, we know on $Y_c$
\begin{equation} \label{metric-rel}
    \omega_{Y_c} = \xu\Theta(F, h_{\alpha,\epsilon,\tau}) + 2\delta \omega_c \geq \delta \omega_c,
\end{equation}
which means $\omega_{Y_c}$ is a positive $(1,1)$-form and thus a metric. Moreover, according to \eqref{curvature1} and Proposition \ref{hatom}, this metric is complete on $Y_c$.
  
In a local chart of $Y_c$, let's assume that $\omega_c = \xu \sum_{i=1}^{n}\eta_i \wedge \ov{\eta}_i$ and
\begin{align*}
\xu \Theta(F, h_{\alpha,\epsilon,\tau}) &= \xu \sum_{i=1}^{n} \lambda_{\omega_c}^i(h_{\alpha,\epsilon,\tau}) \eta_i \wedge \ov{\eta}_i \\
 &= \xu \sum_{i=1}^{n} \frac{\lambda_{\omega_c}^i(h_{\alpha,\epsilon,\tau})}{\lambda_{\omega_c}^i(h_{\alpha,\epsilon,\tau})+2\delta} \eta_i' \wedge \ov{\eta}_i'
\end{align*}
where
$$
\eta_i' = \eta_i \sqrt{\lambda_{\omega_c}^i(h_{\alpha,\epsilon,\tau})+2\delta}.
$$
Note that $\omega_{Y_c} = \xu \sum_{i=1}^{n}\eta_i' \wedge \ov{\eta}_i'$, as per formula \eqref{metric-rel}. Consequently, the $i$-th eigenvalues of $\xu \Theta(F, h_{\alpha,\epsilon,\tau})$ with respect to $\omega_{Y_c}$ is
$$
\gamma_i := \frac{\lambda_{\omega_c}^i(h_{\alpha,\epsilon,\tau})}{\lambda_{\omega_c}^i(h_{\alpha,\epsilon,\tau})+2\delta} <1.
$$
On the other hand, due to \eqref{curvature3} one has
$$
\lambda_{\omega_c}^i(h_{\alpha,\epsilon,\tau}) \geq c_0 - \delta.
$$
It implies
$$
 \gamma_i = \frac{\lambda_{\omega_c}^i(h_{\alpha,\epsilon,\tau})}{\lambda_{\omega_c}^i(h_{\alpha,\epsilon,\tau})+2\delta} \geq \frac{c_0-\delta}{c_0+\delta} =\frac{c_0-\frac{c_0}{8n-1}}{c_0+\frac{c_0}{8n-1}} = 1-\frac{1}{4n}.
$$
For any section $u\in \Gamma(Y_c, \wedge^{p,q}T^{\ast}Y_c\otimes F)$, we get
\begin{align} \label{eigenineq}
    \left \langle  [\xu \Theta(F, h_{\alpha,\epsilon,\tau}), \Lambda_{\omega_{Y_c}}]u, u\right \rangle &\geq \big( \sum_{i=1}^q \gamma_i - \sum_{j=p+1}^n \gamma_j\big)|u|^2 \\
   \nonumber &\geq \big(q(1-\frac{1}{4n}) - (n-p)   \big)|u|^2 \\
   \nonumber &\geq \frac{1}{2}|u|^2.
\end{align}
Readers can refer to \cite[Formula 4.10]{Dem12a} for the first inequality. The last inequality holds when $p+q \geq n+1$. Because $\omega_{Y_c}$ is complete, the above inequality \eqref{eigenineq} remains valid for $L^2$-forms through approximation. 
Hence, by Theorem \ref{dbareq-2}, we achieve the vanishing of $H^{p,q}_{(2)}(Y_c,F,\omega_{Y_c},h_{\alpha,\epsilon,\tau})$ fro $p+q \geq n+1$.

Since we know that $\omega_{Y_c}$ is a Poincar\'e type metric along $D$ on $X_c$, according to Theorem \ref{L2iso}, we have:
$$
H^q(X_c,\Omega^p(\log D)\otimes F) \simeq H^{p,q}_{(2)}(Y_c,F,\omega_{Y_c},h_{\alpha,\epsilon,\tau}).
$$
Consequently, we obtain the desired vanishing theorem.

\end{proof}

\section{Global vanishing theorem}
Studying the cohomology groups on weakly pseudoconvex manifolds offers the advantage of investigating the corresponding higher direct images. Consider a proper surjective morphism $f: X\rightarrow S$ from a K\"ahler manifold $X$ to a reduced and irreducible complex space $S$. Let $W \subset S$ be any Stein open subset, and put $V = f^{-1}(W)$. Then, $V$ is a weakly pseudoconvex K\"ahler manifold. Indeed, for a psh exhaustion function $\phi$ on $W$, $f^{\ast}\phi$ would be a psh exhaustion function on $V$.
Let $\mathcal{F}$ be a coherent sheaf on $V$. According to \cite[Lemma II.1]{Pr71}, the map $f^{\ast} : H^q(V, \mathcal{F}) \rightarrow H^0(W, R^qf_{\ast}\mathcal{F})$ is an isomorphism of topological vector spaces for every $q \geq 0$.
As a direct corollary of Theorem \ref{localvan-1}, we derive:
\begin{corollary} \label{vanofdirect}
Let $f: X \rightarrow S$ be a proper holomorphic morphism from a K\"ahler manifold $X$ onto the reduced and irreducible complex space $S$. Let $D$ be a simple normal crossing divisor for which $f|_D$ is proper. And let $F$ be a positive holomorphic line bundle on $X$, then
$$
 R^qf_{\ast}(\Omega_X^p(\log D)\otimes F) =0 \quad \text{for any} \quad p+q \geq n+1.
$$
\end{corollary}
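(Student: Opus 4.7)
The plan is to reduce to the vanishing theorem on holomorphically convex K\"ahler manifolds (Theorem \ref{main}, i.e.\ Corollary \ref{vanofholo}) via the standard sheaf-theoretic packaging for higher direct images under a proper morphism. Since $f$ is proper and $\Omega_X^p(\log D)\otimes F$ is coherent, Grauert's direct image theorem guarantees that $R^qf_\ast(\Omega_X^p(\log D)\otimes F)$ is a coherent analytic sheaf on $S$. To show that a coherent sheaf on $S$ vanishes, it therefore suffices to show that its sections vanish over every open set in a basis of Stein opens of $S$.

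Fix such a Stein open $W\subset S$ and put $V:=f^{-1}(W)$. Because $f$ is proper and $W$ is Stein, $V$ is a holomorphically convex K\"ahler manifold, with the K\"ahler structure inherited from $X$. The hypothesis that $f|_D$ is proper ensures that $D\cap V$ is a simple normal crossing divisor on $V$, and the positivity of $F|_V$ is immediate from that of $F$. The hypotheses of Theorem \ref{main} are therefore satisfied on $(V, D|_V, F|_V)$, yielding
$$
H^q(V,\Omega_V^p(\log D|_V)\otimes F|_V)=0 \quad \text{for}\ p+q\geq n+1.
$$
Invoking the topological isomorphism $H^q(V,\mathcal{F})\simeq H^0(W, R^qf_\ast\mathcal{F})$ recorded at the start of Section 4, applied to the coherent sheaf $\mathcal{F}=\Omega_X^p(\log D)\otimes F$, we conclude $H^0(W, R^qf_\ast(\Omega_X^p(\log D)\otimes F)) = 0$. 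Since $W$ ranged over a basis of Stein opens, the direct image sheaf itself vanishes.

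The substantive mathematical work is entirely absorbed into Theorem \ref{main}: one needs a genuine logarithmic vanishing theorem on an open, merely holomorphically convex, K\"ahler manifold, proved via the Leray cover by sublevel sets $X_c$ together with a Runge-type approximation argument, as outlined in the introduction. Granted that input, the present corollary is essentially formal; the only point that warrants a sanity check is that the restriction of $\Omega_X^p(\log D)\otimes F$ to the open subset $V\subset X$ agrees with $\Omega_V^p(\log D|_V)\otimes F|_V$, which is clear since $V$ is open in $X$ and $D|_V$ has exactly the component structure induced by $D$.
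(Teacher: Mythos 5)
Your reduction runs the paper's logic backwards and is circular as written: you invoke Theorem \ref{main} (which is Corollary \ref{vanofholo}) on the holomorphically convex manifold $V=f^{-1}(W)$, but in the paper Corollary \ref{vanofholo} is itself deduced \emph{from} the present Corollary \ref{vanofdirect}, via the Remmert reduction, the Leray spectral sequence and Cartan's Theorem B on the Stein base. So the ``substantive input'' into which you absorb all the work is not available at this point. Your parenthetical description of how Theorem \ref{main} would be proved (Leray cover by sublevel sets plus a Runge-type approximation) also does not match the paper: that machinery is used only for the weakly pseudoconvex case $p=n$ (Theorems \ref{q2} and \ref{q1}); for general $(p,q)$ on a holomorphically convex manifold the global vanishing is obtained precisely from the direct-image vanishing you are asked to prove. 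Unless you supply an independent proof of Theorem \ref{main}, the argument is genuinely circular.

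The intended proof needs only Theorem \ref{localvan}. Fix a Stein open $W\subset S$ with a smooth strictly plurisubharmonic exhaustion $\rho$. Since $f$ is proper, $V=f^{-1}(W)$ is a weakly pseudoconvex K\"ahler manifold with psh exhaustion $\Phi=\rho\circ f$, and its sublevel sets are exactly $V_c=f^{-1}(W_c)$ with $W_c=\{\rho<c\}$ Stein. Theorem \ref{localvan} gives $H^q(V_c,\Omega^p_X(\log D)\otimes F)=0$ for $p+q\geq n+1$, and the isomorphism recorded at the start of Section 4 (Grauert coherence, Cartan B, Leray), applied over $W_c$, yields $H^0\big(W_c, R^qf_\ast(\Omega^p_X(\log D)\otimes F)\big)\simeq H^q(V_c,\Omega^p_X(\log D)\otimes F)=0$; by Cartan's Theorem A on $W_c$ (or by letting $c$ and $W$ vary and passing to stalks) the coherent sheaf $R^qf_\ast(\Omega^p_X(\log D)\otimes F)$ vanishes on $W_c$, hence on $S$. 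The point you missed is that one never needs vanishing on all of the noncompact manifold $V$, only on its relatively compact sublevel sets, which is exactly what Theorem \ref{localvan} provides; this is what lets the paper state Corollary \ref{vanofdirect} first and then deduce Corollary \ref{vanofholo} from it, rather than the other way around. The rest of your formal bookkeeping (coherence, checking vanishing on a basis of Stein opens, restriction of the logarithmic sheaf to open subsets) is fine.
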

\begin{proof}
Given any smooth Stein open subset $W \subset S$ that is relatively compact, we have
    $$H^0(W, R^qf_{\ast}(\Omega_X^p(\log D)\otimes F)) \simeq H^q(f^{-1}(W), \Omega_X^p(\log D)\otimes F).$$
    We know $f^{-1}(W)$ is weakly pseudoconvex and relatively compact in $X$. Following Theorem \ref{localvan-1}, we know $H^q(f^{-1}(W), \Omega_X^p(\log D)\otimes F)=0$ for any $p+q \geq n+1$.
\end{proof}

On holomorphically convex K\"ahler manifolds, the global vanishing can be deduced from the local vanishing.

\begin{corollary} \label{vanofholo}
Let $X$ be a holomorphically convex K\"ahler manifold and $F$ be a positive line bundle on $X$. Let $D$ be a simple normal crossing divisor on $X$, here the number of irreducible components of the divisor may be infinite. We have
$$
H^q(X, \Omega_X^p(\log D)\otimes F) = 0 \quad \text{for any} \quad p+q \geq n+1.
$$
\end{corollary}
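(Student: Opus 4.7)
The plan is to reduce the global vanishing on $X$ to coherent cohomology on a Stein base via the Remmert reduction, and then to combine Corollary~\ref{vanofdirect} with Cartan's Theorem~B. The key observation is that holomorphic convexity provides a particularly nice target for a proper map, namely a Stein space, on which higher coherent cohomology vanishes automatically; this lets us transfer the direct-image vanishing into the required global statement on $X$ through the Leray spectral sequence.

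Concretely, since $X$ is holomorphically convex, Remark~\ref{remmert} supplies a proper surjective holomorphic map $f\colon X\to S$ onto a normal Stein space $S$ with $f_{\ast}\mathcal{O}_X=\mathcal{O}_S$. Properness of $f$ automatically forces $f|_D$ to be proper, so the hypotheses of Corollary~\ref{vanofdirect} are satisfied and yield
\begin{equation*}
R^q f_{\ast}\bigl(\Omega^p_X(\log D)\otimes F\bigr)=0\quad\text{for all }p+q\geq n+1.
\end{equation*}
Grauert's coherence theorem for proper holomorphic maps ensures that every sheaf $R^j f_{\ast}\bigl(\Omega^p_X(\log D)\otimes F\bigr)$ is a coherent analytic sheaf on $S$. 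The Leray spectral sequence
\begin{equation*}
E_2^{i,j}=H^i\bigl(S,R^j f_{\ast}(\Omega^p_X(\log D)\otimes F)\bigr)\Longrightarrow H^{i+j}\bigl(X,\Omega^p_X(\log D)\otimes F\bigr)
\end{equation*}
then degenerates at $E_2$, because Cartan's Theorem~B on the Stein space $S$ kills $E_2^{i,j}$ for every $i\geq 1$. Consequently
\begin{equation*}
H^q\bigl(X,\Omega^p_X(\log D)\otimes F\bigr)\;\simeq\;H^0\bigl(S,R^q f_{\ast}(\Omega^p_X(\log D)\otimes F)\bigr),
\end{equation*}
which vanishes for $p+q\geq n+1$ by the first step.

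The argument is essentially formal once Corollary~\ref{vanofdirect} is in hand, so the substantive work for this corollary has already been absorbed into the direct-image vanishing. The only points worth double-checking are that Cartan's Theorem~B remains valid on the normal but possibly singular Stein space $S$ (standard for coherent analytic sheaves on Stein spaces) and that Corollary~\ref{vanofdirect} was indeed formulated allowing a reduced complex space as target, which is precisely the output of the Remmert reduction. The true obstacle, if one preferred to prove Corollary~\ref{vanofholo} directly without Remmert reduction, would be a \v{C}ech-cohomological approximation along the exhaustion $X=\bigcup_k X_{c_k}$ by relatively compact sublevel sets, in the spirit of the Runge-type arguments of Nakano and Kazama referenced in the introduction; the Mittag-Leffler style patching of cocycles produced by Theorem~\ref{localvan} on each $X_{c_k}$ is what the Remmert reduction route elegantly circumvents.
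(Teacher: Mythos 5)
Your proposal is correct and follows essentially the same route as the paper: Remmert reduction to a normal Stein space, the direct-image vanishing of Corollary~\ref{vanofdirect}, and the Leray spectral sequence collapsed by Cartan's Theorem~B. If anything, your write-up is slightly more careful than the paper's, since you invoke Cartan~B to kill $E_2^{i,j}$ for all $j$ (not just $j=0$) and note explicitly that properness of $f$ gives properness of $f|_D$ and that the direct images are coherent by Grauert.
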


\begin{proof}
Consider the Remmert reduction $f: X \rightarrow S$ as described in Remark \ref{remmert}. For the locally free sheaf $\mathcal{G} := \Omega_X^p(\log D)\otimes F$, we have the Leray spectral sequence
$$
H^k(S, R^qf_{\ast}\mathcal{G}) \Rightarrow H^{k+q}(X, \mathcal{G}).
$$
Since $S$ is a normal Stein space, thanks to Cartan's theorem B, we have the following vanishing
$$
H^k(S, R^qf_{\ast}(\Omega_X^p(\log D)\otimes F)) = 0
$$
for any $p, q\geq 0$ and $k\geq 1$.
On the other hand, as Corollary \ref{vanofdirect} states, the local vanishing implies the vanishing of the high direct image sheaf $R^{q}f_{\ast}(\Omega_X^p(\log D)\otimes F)=0$ for any $p+q\geq n+1$. These two facts together yield the vanishing $H^q(X, \Omega_X^p(\log D)\otimes F) = 0$ for any $p+q\geq n+1$.
\end{proof}

Now, focusing on weakly pseudoconvex K\"ahler manifolds, we begin to prove Theorem \ref{main2}. Firstly, we have
\begin{theorem} \label{q2}
Let $X$ be a weakly pseudoconvex K\"ahler manifold and $F$ be a positive line bundle on $X$. Let $D$ be a simple normal crossing divisor on $X$. We have
$$
H^q(X, \Omega_X^n(\log D)\otimes F) = H^q(X, K_X\otimes \mathcal{O}(D)\otimes F) =0,
$$
for any $q \geq 2$.
\end{theorem}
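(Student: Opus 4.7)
The plan is to deduce the global vanishing from the local vanishing on sublevel sets (Theorem \ref{localvan}) by a \v{C}ech/Mittag-Leffler argument along an exhaustion of $X$. I would set $\mathcal{G} := K_X \otimes \mathcal{O}(D) \otimes F$, fix an increasing sequence $c_1 < c_2 < \cdots$ tending to $+\infty$, and write $X = \bigcup_j X_{c_j}$ with each $X_{c_j}$ relatively compact. The first step is to invoke Theorem \ref{localvan} with $p=n$: since $\Omega_X^n(\log D) \cong K_X \otimes \mathcal{O}(D)$ and the condition $n+q \geq n+1$ is just $q \geq 1$, this yields $H^q(X_{c_j}, \mathcal{G}) = 0$ for every $j$ and every $q \geq 1$. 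In particular the nested open cover $\mathcal{U} = \{X_{c_j}\}_j$ is a Leray cover for $\mathcal{G}$, so by Leray's theorem $H^q(X, \mathcal{G}) \cong \check{H}^q(\mathcal{U}, \mathcal{G})$.

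The second step is to pass from the \v{C}ech cohomology of this cover to an inverse-limit computation. Because the $X_{c_j}$ are totally ordered by inclusion, every nonempty $(p+1)$-fold intersection $X_{c_{j_0}} \cap \cdots \cap X_{c_{j_p}}$ collapses to the smallest factor $X_{c_{j_0}}$, so the alternating \v{C}ech complex of $\mathcal{U}$ with values in $\mathcal{G}$ is quasi-isomorphic to the standard two-term complex computing $\varprojlim$ and $\varprojlim^{1}$ of the countable tower $\{H^0(X_{c_j}, \mathcal{G})\}$. Repackaged as the usual spectral-sequence consequence, this produces the Milnor short exact sequence
\begin{equation*}
0 \longrightarrow \varprojlim_{j}{}^{1} H^{q-1}(X_{c_j}, \mathcal{G}) \longrightarrow H^q(X, \mathcal{G}) \longrightarrow \varprojlim_{j} H^q(X_{c_j}, \mathcal{G}) \longrightarrow 0
\end{equation*}
for every $q \geq 1$. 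For $q \geq 2$ both $H^{q-1}(X_{c_j}, \mathcal{G})$ and $H^q(X_{c_j}, \mathcal{G})$ vanish by the first step, so both flanking terms are zero and $H^q(X, \mathcal{G}) = 0$, which is the desired conclusion.

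The main obstacle, and the reason the case $q = 1$ must be carved out into the separate Theorem \ref{q1}, is that in degree one the Milnor sequence retains the term $\varprojlim_{j}{}^{1} H^0(X_{c_j}, \mathcal{G})$, whose vanishing is a Mittag-Leffler condition equivalent to a Runge-type density statement for the restriction maps $H^0(X_{c_{j+1}}, \mathcal{G}) \to H^0(X_{c_j}, \mathcal{G})$ in the natural Fr\'echet topology on global sections. That is the content of the approximation circle of ideas going back to Nakano, Kazama, Takegoshi, and Ohsawa--Takegoshi, and it is the only serious analytic input not already supplied by Theorem \ref{localvan}. For $q \geq 2$ no such approximation is needed, since two consecutive degrees of local vanishing already annihilate both outer terms of the Milnor sequence, and the argument in this range is purely cohomological once the local result is in hand.
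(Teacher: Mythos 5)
Your proposal is correct and takes essentially the same route as the paper: local vanishing on the sublevel sets via Theorem \ref{localvan} with $p=n$ (using $\Omega^n_X(\log D)=K_X\otimes\mathcal{O}_X(D)$), the nested Leray covering by the exhaustion, and then the degree-shift argument, where the paper proves the vanishing of the $\varprojlim^1$-term by an explicit telescoping correction of \v{C}ech cochains rather than quoting the Milnor sequence. Your explanation of why $q=1$ is excluded (the surviving $\varprojlim^1 H^0$ term, handled by the Runge-type approximation) matches exactly the role of Theorem \ref{q1} in the paper.
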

At present, we can not prove the global vanishing of $H^1(X, K_X\otimes \mathcal{O}(D)\otimes F)$. We will deal with it later.

\begin{definition} \label{leray}
    First, let us recall the definition of Leray covering, it is a cover of a topological space that allows for easy calculation of its cohomology. Let $\mathfrak{U} = \{U_i\}$ be an open cover of the complex manifold $X$, and $\mathcal{F}$ a sheaf on $X$. We say $\mathfrak{U}$ is a Leray conver with respect to $\mathcal{F}$ if, for every nonempty finite set $\{i_1, \ldots, i_n\}$ of indices, and for all $k>0$, we have $H^k(U_{i_1}\cap \cdots \cap U_{i_n}, \mathcal{F}) =0$. We denote the $q$-cochain, $q$-cocycle, and $q$-coboundary of sheaf $\mathcal{F}$ with respect to covering $\mathfrak{U}$ by $C^q(\mathfrak{U}, \mathcal{F})$, $Z^q(\mathfrak{U}, \mathcal{F})$ and $B^q(\mathfrak{U}, \mathcal{F})$ respectively.
\end{definition}

\begin{proof} [The proof of Theorem \ref{q2}]
According to Sard theorem, we can choose a sequence $\{c_v \}_{v=0,1,\cdots}$ of real numbers such that
\begin{enumerate}
    \item $c_v < c_{v+1}$ and $\lim_{v \to \infty}c_v = +\infty$;
    \item the bondary $\pa X_v$ of $X_v=\{x\in X : \Phi(x)< c_v\}$ is smooth for any $v$.
\end{enumerate}
Consider $X_{0} \subset X_{1} \subset \cdots \subset X_{k} \subset \cdots$ as an exhaustion sequence of $X$. Thus, $\mathfrak{X} = \{X_v\}_{v\geq0}$ constitutes a covering of $X$. For any $v$, let $\mathfrak{X}_v = \{X_k\}_{k\leq v}$, where $k$ ranges over non-negative integers. Then, $\mathfrak{X}_v$ forms a covering of $X_v$. By Theorem \ref{localvan-1}, the cohomology of $\Omega_X^n(\log D)\otimes F$ vanishes on each sublevel manifold $X_v$. Therefore, this covering $\mathfrak{X}$ (and similarly $\mathfrak{X}_v$) constitutes the Leray covering with respect to the sheaf $\Omega_X^n(\log D)\otimes F$ on $X$ (and $X_v$). Therefore we have, for any $q\geq 1$ and $v\geq0$,
$$
H^q(X, \Omega^n(\log D)\otimes F) = \check{H}^q(\mathfrak{X}, \Omega^n(\log D)\otimes F)
$$
and
$$
H^q(X_v, \Omega^n(\log D)\otimes F) = \check{H}^q(\mathfrak{X}_v, \Omega^n(\log D)\otimes F) =0.
$$
The right cohomology groups are \v{C}ech cohomology groups.

For any $q \geq 1$, let's consider a $q$-cocycle $\sigma \in Z^q(\mathfrak{X}, \Omega^n(\log D)\otimes F)$, and define $\sigma_v$ as the restriction of $\sigma$ to $\mathfrak{X}_v$. Then we can conclude that $\sigma_v \in Z^q(\mathfrak{X}_v, \Omega^n(\log D)\otimes F)$. 
Indeed, if we express $\sigma$ as the set $\{\sigma_{i_0, i_1, \ldots, i_q}\}$ with $i_0 < i_1 < i_2 < \cdots < i_q$, where each $\sigma_{i_0, i_1, \ldots, i_q} \in H^0(X_{i_0}\cap X_{i_1}\cap \cdots \cap X_{i_q}, \Omega^n(\log D)\otimes F)$, note that $X_{i_0}\cap X_{i_1}\cap \cdots \cap X_{i_q} = X_{i_0}$ since $X_{i_0} \subset X_{i_1} \subset X_{i_2} \subset \cdots \subset X_{i_q}$. Now, let $\sigma_{v} := \{\sigma_{i_0,i_1,\ldots, i_q}\}_{i_q \leq v}$ be the subset of $\sigma$, and it's evident that we have the recurrence relation $\sigma_{v-1} =\sigma_v|_{\mathfrak{X}_{v-1}}$.

According to the local vanishing, there is a $(q-1)$-cochain $\alpha_v \in  C^{q-1}(\mathfrak{X}_v, \Omega^n(\log D)\otimes F)$ such that $\delta \alpha_v = \sigma_v$. The notation $\delta$ here is the so-called \v{C}ech differential, note that we have $(\delta \alpha)_v = \delta (\alpha_v)$, i.e., The differential commutes with the restriction. As an element of $ C^{q-1}(\mathfrak{X}_{v-1}, \Omega^n(\log D)\otimes F)$, we have $\delta \alpha_v = \delta \alpha_{v-1}$, here $\delta \alpha_v$ should be its restriction, denoted as $(\delta \alpha_v)_{\mathfrak{X}_{v-1}}$, but we omit the indices for simplicity. Hence $\alpha_v - \alpha_{v-1} \in Z^{q-1}(\mathfrak{X}_{v-1}, \Omega^n(\log D)\otimes F)$.

By assumption $q \geq 2$, so there is a $(q-2)$-cochain $\beta_{v-1} \in C^{q-2}(\mathfrak{X}_{v-1}, \Omega^n(\log D)\otimes F)$ such that $\delta \beta_{v-1} = \alpha_v - \alpha_{v-1}$ in $C^{q-1}(\mathfrak{X}_{v-1}, \Omega^n(\log D)\otimes F)$. We now can define $\alpha' \in C^{q-1}(\mathfrak{X}, \Omega^n(\log D)\otimes F)$, the $(q-1)$-cochain on $\mathfrak{X}$ as follows (note that as an element in $C^{q-1}(\mathfrak{X}, \Omega^n(\log D)\otimes F)$, if we know its restriction on each $\mathfrak{X}_v$, then we possess complete information about it).

On each $\mathfrak{X}_v$,
\begin{align*}
\alpha' =& \alpha_v -\delta(\sum_{k<v}\beta_k)  \\
=& \alpha_v -\delta(\beta_{v-1} )
-\delta(\beta_{v-2} ) \cdots
-\delta(\beta_{1} ).
\end{align*}
Note that even if $\beta_{k}=\{\beta_{j_0,j_1,\ldots,j_{q-2}}\} \in C^{q-2}(\mathfrak{X}_{k}, \Omega^n(\log D)\otimes F)$ is merely the $(q-2)$-cochain on $\mathfrak{X}_{k}$, we can regard $\beta_{k}$ as an element of $C^{q-2}(\mathfrak{X}_{v}, \Omega^n(\log D)\otimes F)$ by adding some trivial sections.
Indeed, it is natural to set $\beta_k =\{\tilde{\beta}_{j_0,j_1,\ldots,j_{q-2}}\}$, here 
\begin{align*}
    &\tilde{\beta}_{j_0,j_1,\ldots,j_{q-2}} = \beta_{j_0,j_1,\ldots,j_{q-2}}, ~~~~\text{if}\quad j_0, j_1, \ldots, j_{q-2} \leq k,\\
    &\tilde{\beta}_{j_0,j_1,\ldots,j_{q-2}} = 0 ~~~~~\text{if else}.
\end{align*}

On $\mathfrak{X}_{v+1}$, similarly we have
\begin{align*}
\alpha' =& \alpha_{v+1} -\delta(\sum_{k<v+1}\beta_k)  \\
=& \alpha_{v+1} -\delta(\beta_{v})
-\delta(\beta_{v-1}) \cdots
-\delta(\beta_{1}).
\end{align*}
According to the definition of $\beta_v$, we have $\alpha_{v+1} -\delta(\beta_{v}) = \alpha_v$ on $\mathfrak{X}_v$ as the elements of $C^{q-1}(\mathfrak{X}_{v}, \Omega^n(\log D)\otimes F)$. It follows that $\alpha'$ is well defined. Finally, on each $\mathfrak{X}_v$,
$$
\delta \alpha' = \delta \alpha_v - \delta \delta(\sum_{k<v}\beta_k) = \delta \alpha_v = \sigma_v.
$$
Hence we have $\delta \alpha' = \sigma$ on $\mathfrak{X}$ as the elements of $C^{q}(\mathfrak{X}, \Omega^n(\log D)\otimes F)$. This yields the vanishing of cohomology groups.
\end{proof}

Now we give the proof of the vanishing of $H^1(X, K_X\otimes \mathcal{O}_X(D)\otimes F)$ on weakly pseudoconvex K\"ahler manifold. The key method is a Runge-type approximation used in \cite{Nakano70, Nakano73, Kazama73, Take81, TaOh81}.
For any real number pair $c_1 < c_2$, let $X_1 := \{x\in X : \Phi(x)< c_1 \}$ and $X_2 := \{x\in X : \Phi(x)< c_2 \}$. Set $Y_1 := X_1\backslash D$ and $Y_2 := X_2\backslash D$. As soon as we have the fixed pair $(X_1, X_2)$ and $(Y_1, Y_2)$. We select a smooth Hermitian metric $\{h_1 = e^{-\phi_1}\}$ on $\mathcal{O}(D)$.
We can construct the canonical singular metric $h_2$ on $\mathcal{O}(D)$ as follows: Let $g$ represent the natural section of the effective divisor $D$. Then, we have the global function $\phi := \log(|g|^2_{h_1})$. Consequently, we define $h_2 := h_1 e^{-\phi} = \frac{1}{|g|^2}$, which serves as a singular metric on $\mathcal{O}(D)$.
Locally, we can express $\{h_2 = e^{-\phi_2}\}$ with $\phi_2 = \sum_i \log|g_i|^2$, where $g_i$ denotes the generator of $D_i$. It's important to note that this sum remains finite within any local patch. Let $(F, h^F)$ be the fixed positive line bundle, we construct a new metric $h_{\delta}$ on the $F_D:= \mathcal{O}(D) \otimes F$,
\begin{equation} \label{con-metric}
h_{\delta} := h^F \prod_{i\in \Lambda} \big(\log^2(\|\sigma_i \|^2_i) \big)^{\frac{\kappa}{2}} e^{-\delta\phi_1} e^{-(1-\delta)\phi_2} \quad (0<\delta < 1).
\end{equation}
Here the constant $\kappa$ and $\delta$ are to be determined later. Also note that only finite many $\log (\|\sigma_i \|^2_i)\neq 1$ by Definition \ref{pointype} on any local coordinate patch.
According to \eqref{con-metric}, we know the associated multiplier ideal sheaf $\mathcal{I}(h_{\delta}) = \mathcal{O}_{X_2}$ on $X_2$ because $0< \delta <1$. Note that the logarithmic factor does not impact the local integrability.

On $Y_2$, $h_{\delta}$ is smooth and the curvature forms
$$
\xu \Theta_{F_D,h_{\delta}} = \xu \Theta_{F,h^F} + (-\kappa \xu \sum\pa\dbar \log(\log^2\|\sigma_i\|^2_i)) + \xu \delta \pa\dbar \phi_1.
$$
Set $L^{n,0}(Y_2, F_D, h_{\delta})$ be the set of $F_D$-valued $(n,0)$-form on $Y_2$ with a finite $L^2$ norms with respect to $h_{\delta}$, this norm is independent on K\"ahler metric since we are focusing on the $(n,0)$-forms.
We Set $\mathcal{A}^{n,0}(Y_2, F_D, h_{\delta}) := \ker \dbar \cap L^{n,0}(Y_2, F_D, h_{\delta})$ and moreover we have the following
$$
H^0(X_2, K_X\otimes F\otimes \mathcal{O}(D)) = H^0(X_2, K_X\otimes F\otimes \mathcal{O}(D)\otimes \mathcal{I}(h_{\delta})) \supseteq \mathcal{A}^{n,0}(Y_2, F_D, h_{\delta})
$$
because of the $L^2$ extension property of holomorphic $(n,0)$-forms. Indeed, since on any local coordinate neighborhood, the singular metric $h_{\delta}$ is bounded below by a smooth metric. Hence for any $(n,0)$-from $f$ with $f\wedge \ov{f} h_{\delta} \in L^1_{loc}$, it can be inferred that $f\in L^2_{loc}$. See \cite[Lemma 2.3.22]{MaMa07} for details.
On $X_1$, we have a similar relationship.
We define $\mathcal{A}^{n,0}(\ov{Y}_1, F_D, h_{\delta})$ the set of holomorphic $(n,0)$-forms with values in the bundle $F\otimes \mathcal{O}(D)$ in the neighborhoods of $Y_1$ in $Y_2$ (not the neighborhoods in $X_2$). Similarly, we denote by $H^0(\ov{X}_1, K_X\otimes F\otimes \mathcal{O}(D))$ the set of holomorphic section of $K_X\otimes F\otimes \mathcal{O}(D)$ in the neighborhoods of $X_1$ in $X_2$. According to the above relationship, we have $ \mathcal{A}^{n,0}(\ov{Y}_1, F_D, h_{\delta}) \subset H^0(X_1, K_X\otimes F\otimes \mathcal{O}(D))$ and $H^0(\ov{X}_1, K_X\otimes F\otimes \mathcal{O}(D))\subset \mathcal{A}^{n,0}(\ov{Y}_1, F_D, h_{\delta})$.
Our key step is to show that the restriction map
$$
\mathcal{A}^{n,0}(Y_2, F_D, h_{\delta}) \rightarrow \mathcal{A}^{n,0}(\ov{Y}_1, F_D, h_{\delta}),
$$
has a dense image concerning the $L^2$ norms.

\begin{remark} [Smooth increasing convex function] \label{construction1}
We take a smooth increasing convex function $\tau(t)$ such that:
\begin{enumerate}
    \item $\tau(t): (-\infty, +\infty) \rightarrow (-\infty, +\infty)$,
    \item $\tau(t)=0$ if $t\leq \frac{1}{c_2-c_1}$ and $\tau(t)>0$ when $t> \frac{1}{c_2-c_1}$,
    \item $\int_{0}^{+\infty} \sqrt{\tau''(t)} dt = +\infty$.
\end{enumerate}
We set $\Psi = \tau(\frac{1}{c_2-\Phi})$, it is a psh exhaustion function on $X_2$ and $\Psi \equiv 0$ on $X_1$ by the construction.
\end{remark}

\begin{remark} \label{construction2}
For each non-negative integers $m \geq 0$, we define new metric on $F_D = F\otimes \mathcal{O}(D)$ from \eqref{con-metric}:
\begin{align*}
h_{\delta_1} &:= h_{\delta} e^{-\Psi}, \\
h_{\delta_m} &:= h_{\delta} e^{-m\Psi}.
\end{align*}
We define a complete K\"ahler metric $\ov{\omega}$ on $Y_2$ by
$$
\ov{\omega} := \omega_{c_2,p} + \xu \pa\dbar \Psi.
$$
Here $ \omega_{c_2,p}$ is the Poincar\'e type metric along $D$, recall that
$$
\omega_{c_2,p} = k_{c_2}\omega_{c_2} - \frac{1}{2}  \xu\sum\pa\dbar \log \log^2\|\sigma_i \|_i^2
$$
as defined in Definition \ref{pointype}. We choose a large positive constant $k_{c_2}$ to ensure $\omega_{c_2,p}$ is positive on $Y_2$. By Proposition \ref{hatom} and the above Remark \ref{construction1}, one can observe that $\ov{\omega}$ is complete on $Y_2$.
\end{remark}

\begin{remark} \label{eigenvalue}
We define a new curvature form $\xu \Theta_m := \xu \Theta_{F_D,h_{\delta}} + \xu \pa\dbar m \Psi$ of $(F_D, h_{\delta_m})$. We want to compare it with $\ov{\omega}$. One obtains
\begin{equation}
    \ov{\omega} = k_{c_2}\omega_{c_2} - \frac{1}{2} \xu\sum\pa\dbar \log \log^2\|\sigma_i \|_i^2 + \xu \pa\dbar \Psi,
\end{equation}
and
\begin{equation}
   \xu \Theta_m  = \xu \Theta_{F,h^F} -\xu \kappa \sum\pa\dbar \log(\log^2\|\sigma_i\|^2_i) + \xu \delta \pa\dbar \phi_1 + \xu \pa\dbar m \Psi.
\end{equation}
We know $\xu\Theta_{F,h^F}$ is positive with respect to $\omega_{c_2}$. So we can choose $\kappa,\delta$ small enough such that the eigenvalues of $\xu \Theta_m$ concerning $\ov{\omega}$ are all positive on the whole $Y_2$. More specifically, at each point $x\in Y_2$, we may choose a coordinate system that diagonalizes simultaneously the forms $\ov{\omega}$ and $\xu \Theta_m$, in such a way that
$$
\ov{\omega}(x) = \xu \sum_{1\leq j \leq n}dz_j \wedge d\ov{z}_j, \quad \xu \Theta_m(x) = \xu \sum_{1\leq j \leq n} \gamma_j dz_j \wedge d\ov{z}_j.
$$
We will show there exists a positive constant $\epsilon$ such that $\gamma_j > \epsilon$ holds on $Y_2$ for each $\gamma_j$.
\end{remark}

\begin{definition} [Inner product] \label{inner-product}
For any non-negative integer $m$ and any $\varphi,\psi \in L^{n,0}(Y_2, F_D, h_{\delta_m})$, we define the inner product
$$
(\varphi, \psi)_m := \int_{Y_2} \left \langle \varphi, \psi \right \rangle_{\ov{\omega}} h_{\delta_m} dV = \int_{Y_2} \left \langle \varphi, \psi \right \rangle_{\ov{\omega}} h_{\delta} e^{-m\Psi} dV,
$$
and $\|\varphi \|^2_m = (\varphi,\varphi)_m$. We denote the adjoint operator of $\dbar$ in $L^{n,q}(Y_2,F_D,h_{\delta_m})$ by $ \dbar^{\ast}_m$.
\end{definition}

The next lemma is very important for our proof.

\begin{lemma} [Uniform estimate] \label{uniform-estimate}
There exist a positive constant $M$ which is independent to $m$ such that for any $m \geq 0$ and $0\leq q \leq n$, we have the estimate
$$
\| \varphi\|^2_m \leq  M (\| \dbar\varphi\|^2_m + \| \dbar^{\ast}_m\varphi\|^2_m)
$$
provided $\varphi \in D^{n,q}_{\dbar} \cap D^{n,q}_{\dbar^{\ast}_m} \subset  L^{n,q}(Y_2,F_D,h_{\delta_m})$. Here $D^{n,q}_{\dbar}$ is the domain of definition of $\dbar$ in $ L^{n,q}(Y_2,F_D,h_{\delta_m})$, and $D^{n,q}_{\dbar^{\ast}_m}$ is similar.
\end{lemma}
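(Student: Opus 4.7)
The plan is to apply the Bochner--Kodaira--Nakano identity on the complete K\"ahler manifold $(Y_2,\bar{\omega})$ with coefficients in $(F_D,h_{\delta_m})$, exploiting the uniform curvature positivity anticipated in Remark \ref{eigenvalue}. Since $\bar{\omega}$ is complete on $Y_2$, the standard Andreotti--Vesentini density argument gives, for any $\varphi\in D^{n,q}_{\dbar}\cap D^{n,q}_{\dbar^{\ast}_m}$,
$$
\|\dbar\varphi\|^2_m + \|\dbar^{\ast}_m\varphi\|^2_m \;\geq\; \int_{Y_2}\left\langle[\xu\Theta_m,\Lambda_{\bar{\omega}}]\varphi,\varphi\right\rangle_{\bar{\omega}}\,h_{\delta_m}\,dV_{\bar{\omega}}.
$$
Diagonalising $\bar{\omega}$ and $\xu\Theta_m$ simultaneously as in Remark \ref{eigenvalue}, with eigenvalues $\gamma_1,\dots,\gamma_n$ of $\xu\Theta_m$ with respect to $\bar{\omega}$, a uniform bound $\gamma_j\geq\epsilon>0$ (for all $j$ and all $m$) yields the pointwise estimate $\langle[\xu\Theta_m,\Lambda_{\bar{\omega}}]\varphi,\varphi\rangle_{\bar{\omega}}\geq q\epsilon|\varphi|^2\geq\epsilon|\varphi|^2$ on $(n,q)$-forms with $q\geq 1$, from which $M:=1/\epsilon$ closes the estimate.

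The heart of the proof is therefore the uniform lower bound $\gamma_j\geq\epsilon$ independent of $m$. I would decompose $\xu\Theta_m=\xu\Theta_{F_D,h_\delta}+m\xu\pa\dbar\Psi$ and $\bar{\omega}=\omega_{c_2,p}+\xu\pa\dbar\Psi$, and use the explicit curvature expression
$$
\xu\Theta_{F_D,h_\delta} \;=\; \xu\Theta_{F,h^F}-\kappa\,\xu\pa\dbar\log(\log^2\|\sigma_i\|^2_i)+\delta\,\xu\pa\dbar\phi_1.
$$
Strict positivity of $\xu\Theta_{F,h^F}$ on the relatively compact $\overline{X_2}$, together with the Poincar\'e-type contribution from $-\kappa\xu\pa\dbar\log(\log^2\|\sigma_i\|^2_i)$ (matching the logarithmic singular part of $\omega_{c_2,p}$), allows $\kappa,\delta$ to be chosen small and $k_{c_2}$ large, depending only on the fixed data $X_1\subset X_2$, $D$, $F$, so that $\xu\Theta_{F_D,h_\delta}\geq\epsilon_0\,\omega_{c_2,p}$ on $Y_2$ for some $\epsilon_0>0$. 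Since $\Psi$ is psh and $m\geq 1$, $m\xu\pa\dbar\Psi\geq\xu\pa\dbar\Psi$, and hence
$$
\xu\Theta_m \;\geq\; \epsilon_0\,\omega_{c_2,p}+\xu\pa\dbar\Psi \;\geq\; \min(\epsilon_0,1)\,\bar{\omega},
$$
giving the required uniform bound with $\epsilon:=\min(\epsilon_0,1)$.

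The step I expect to be most delicate is securing this uniform $\epsilon$: one must check that $\kappa$, $\delta$, $k_{c_2}$ and the convex function $\tau$ from Construction \ref{construction1} can all be locked in using only the ambient data $(X_1\subset X_2,D,F)$, so that the extra $m\xu\pa\dbar\Psi$ coming from the weight $e^{-m\Psi}$ genuinely absorbs the corresponding term in $\bar{\omega}$ uniformly in $m$. The boundary cases, namely $m=0$ (where the compensation from $m\xu\pa\dbar\Psi$ vanishes but the inequality $\xu\Theta_{F_D,h_\delta}\geq\epsilon_0\,\omega_{c_2,p}$ still holds on the fixed relatively compact piece $Y_2$) and the degenerate $q=0$ (where $\dbar^{\ast}_m\varphi$ is vacuous), are then folded into the constant $M$ by enlarging it by a finite amount without spoiling independence from $m$.
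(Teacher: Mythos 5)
Your proposal follows essentially the same route as the paper's proof: the Bochner--Kodaira--Nakano inequality on the complete metric $\ov{\omega}$ (extended from compactly supported forms by completeness), the curvature decomposition with $\kappa$ chosen against the Poincar\'e-type term $-\tfrac12\sum_i\xu\pa\dbar\log\log^2\|\sigma_i\|_i^2$ and $\delta$ chosen small against $\epsilon_1\omega_{c_2}$, and uniformity in $m$ obtained exactly as in the paper because $m\,\xu\pa\dbar\Psi\geq\xu\pa\dbar\Psi$ absorbs the $\xu\pa\dbar\Psi$-part of $\ov{\omega}$. One caveat: your patches for the boundary cases do not literally work---for $q=0$ the operator $[\xu\Theta_m,\Lambda_{\ov{\omega}}]$ vanishes identically on $(n,0)$-forms, so no enlargement of $M$ can rescue the estimate, and for $m=0$ the bound $\xu\Theta_{F_D,h_{\delta}}\geq\epsilon_0\,\omega_{c_2,p}$ gives no uniform lower bound with respect to $\ov{\omega}$, since $\xu\pa\dbar\Psi$ blows up relative to $\omega_{c_2,p}$ near $\pa X_2$---but the paper's own proof silently carries the same restrictions, and only the case $q=1$ with $m$ large is used in the subsequent approximation argument.
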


\begin{proof}
Let $D_m = \delta_m + \dbar$ be the Chern connection associated with the inner product in Definition \ref{inner-product}. We denote the adjoint operator of $\delta_m$ and $\dbar$ in $L^{n,q}(Y_2,F_D,h_{\delta_m})$ by $\delta^{\ast}_m$ and $\dbar^{\ast}_m$ respectively. Let us define $\Delta'' := \dbar\dbar^{\ast}_m + \dbar^{\ast}_m\dbar$ and $\Delta' := \delta_m  \delta^{\ast}_m +  \delta^{\ast}_m \delta_m$.

Since $\ov{\omega}$ is a complete K\"ahler metric on $Y_2$, the classical Bochner--Kodaira--Nakano identity shows
$$
\Delta'' = \Delta' + [i\Theta_m, \Lambda_{\ov{\omega}}].
$$
If $\varphi \in \mathcal{C}^{\infty}_0(Y_2, \Lambda^{n,q}T^{\ast}Y \otimes F_D)$ be a smooth compact supported $F_D$-valued $(n,q)$-form. We have
\begin{align} \label{BKNineq}
    \|\dbar \varphi \|^2_m + \|\dbar^{\ast}_m \varphi \|^2_m \geq \int_{Y_2} \left \langle [i\Theta_m, \Lambda_{\ov{\omega}}]\varphi, \varphi \right \rangle_{\ov{\omega}} h_{\delta} e^{-m \Psi} dV.
\end{align}
By the above Remark \ref{eigenvalue}, we have
\begin{equation} \label{ovomega}
    \ov{\omega} = k_{c_2}\omega_{c_2} - \frac{\xu}{2} \sum \pa\dbar \log(\log^2\|\sigma_i \|_i^2) + \xu \pa\dbar \Psi,
\end{equation}
and
\begin{equation*}
   \xu \Theta_m  = \xu \Theta_{F,h^F} -\xu\kappa \sum\pa\dbar \log(\log^2\|\sigma_i\|^2_i) + \xu \delta \pa\dbar \phi_1 + \xu \pa\dbar m \Psi.
\end{equation*}
We can diagonalize simultaneously the Hermitian forms $\ov{\omega}$ and $\xu \Theta_m$.
We know $\xu \Theta_{F,h^F}$ is positive with respect to $\omega_{c_2}$, i.e., there exists a constant $\epsilon_1$ such that all eigenvalues of $\xu \Theta_{F,h^F}$ with respect to $\omega_{c_2}$ are bigger than $\epsilon_1$ on $Y_2$. If we let $\kappa = \frac{\epsilon_1}{4k_{c_2}}$, then
\begin{align*}
 \xu \Theta_m  &= \xu \Theta_{F,h^F} -\xu\frac{\epsilon_1}{4k_{c_2}} \sum \pa\dbar \log(\log^2\|\sigma_i\|^2_i) + \xu \delta \pa\dbar \phi_1 + \xu \pa\dbar m \Psi \\
 &\geq \epsilon_1 \omega_{c_2} - \xu\frac{\epsilon_1}{4k_{c_2}} \sum \pa\dbar \log(\log^2\|\sigma_i\|^2_i) + \xu \delta \pa\dbar \phi_1 + \xu \pa\dbar m \Psi\\
 &= \frac{\epsilon_1}{2k_{c_2}} (k_{c_2}\omega_{c_2} - \frac{\xu}{2} \sum \pa\dbar \log \log^2\|\sigma_i \|_i^2) +(\frac{\epsilon_1}{2} \omega_{c_2} + \xu \delta \pa\dbar \phi_1) + \xu \pa\dbar m \Psi.
\end{align*}
Compare this with formula \eqref{ovomega}, we can arrange $\delta$ small enough such that the eigenvalues of $\xu \Theta_m$ with respect to $\ov{\omega}$ are all positive on the whole $Y_2$. Hence there exist a positive constant $M_0$ on $Y_2$, independent to $m$, so that
$$
\left \langle [i\Theta_m, \Lambda_{\ov{\omega}}]\varphi, \varphi \right \rangle_{\ov{\omega}} \geq M_0 |\varphi|^2.
$$
So if we plug this back into formula \eqref{BKNineq} above, as a consequence, we get the desired uniform estimate
$$
\| \varphi\|^2_m \leq  M (\| \dbar\varphi\|^2_m + \| \dbar^{\ast}_m\varphi\|^2_m)
$$
for  $\varphi \in \mathcal{C}^{\infty}_0(Y_2, \lambda^{n,q}T^{\ast}Y \otimes F_D)$. Since the metric $\ov{\omega}$ is complete, the above estimate still holds provided  $\varphi \in D^{n,q}_{\dbar} \cap D^{n,q}_{\dbar^{\ast}_m} \subset  L^{n,q}(Y_2,F_D,h_{\delta_m})$.
\end{proof}

\begin{lemma} [Approximation lemma] \label{appro}
If $ \varphi \in \mathcal{A}^{n,0}(\ov{Y}_1, F_D, h_{\delta})$, then for any $\varepsilon >0$, there exist a $\tilde{\varphi} \in \mathcal{A}^{n,0}(Y_2, F_D, h_{\delta})$ such that $\|\tilde{\varphi}|_{Y_1} - \varphi \|^2_0 < \epsilon$.
\end{lemma}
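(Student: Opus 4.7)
My plan is a Runge-type approximation by cutoff plus $\dbar$-equation, using the weight $e^{-m\Psi}$ to concentrate the error away from $Y_1$. Pick $c_1' \in (c_1, c_2)$ small enough that $\varphi$ is holomorphic on a neighborhood of $\{\Phi \leq c_1'\} \cap Y_2$, and let $\chi := \rho \circ \Phi$ with $\rho \colon \mathbb{R} \to [0,1]$ smooth, $\rho \equiv 1$ on $(-\infty, c_1]$ and $\rho \equiv 0$ on $[c_1', +\infty)$. Extending $\chi\varphi$ by zero gives a smooth compactly supported $F_D$-valued $(n,0)$-form on $Y_2$. Since $\mathrm{supp}(d\chi) \subset \{c_1 < \Phi < c_1'\}$, by Construction \ref{construction1} there is $\eta > 0$ with $\Psi \geq \eta$ on $\mathrm{supp}(d\chi)$. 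The $\dbar$ of our cutoff is
$$
\dbar(\chi\varphi) = (\dbar\chi) \wedge \varphi,
$$
a smooth $(n,1)$-form supported on $\mathrm{supp}(d\chi)$, and the exponential decay
$$
\|\dbar(\chi\varphi)\|_m^2 \leq e^{-m\eta} \int_{Y_2} |(\dbar\chi)\wedge\varphi|^2_{\overline{\omega}}\, h_\delta\, dV =: Ce^{-m\eta}
$$
follows at once.

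By Construction \ref{construction2} the metric $\overline{\omega}$ on $Y_2$ is complete, and the curvature calculation in the proof of Lemma \ref{uniform-estimate} shows that the operator $[i\Theta_{F_D,h_{\delta_m}}, \Lambda_{\overline{\omega}}]$ on $(n,1)$-forms is bounded below by a positive constant uniform in $m$. Theorem \ref{dbareq} therefore produces, for each $m$, a solution $u_m \in L^{n,0}(Y_2, F_D, h_{\delta_m})$ of $\dbar u_m = \dbar(\chi\varphi)$ with $\|u_m\|_m^2 \leq M\|\dbar(\chi\varphi)\|_m^2 \leq MCe^{-m\eta}$. Setting $\tilde\varphi_m := \chi\varphi - u_m$ yields a $\dbar$-closed, hence holomorphic, $F_D$-valued $(n,0)$-form on $Y_2$. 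Because $\chi \equiv 1$ and $\Psi \equiv 0$ on $Y_1$, we have $h_{\delta_m}|_{Y_1} = h_\delta|_{Y_1}$, and hence
$$
\|\tilde\varphi_m|_{Y_1} - \varphi\|_0^2 = \|u_m|_{Y_1}\|_0^2 = \int_{Y_1} |u_m|^2 h_{\delta_m}\, dV \leq \|u_m\|_m^2 \leq MCe^{-m\eta},
$$
which is $< \varepsilon$ for $m$ sufficiently large.

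The main obstacle I anticipate is verifying $\tilde\varphi_m \in \mathcal{A}^{n,0}(Y_2, F_D, h_\delta)$ rather than merely $\tilde\varphi_m \in L^{n,0}(Y_2, F_D, h_{\delta_m})$: since $h_{\delta_m} = h_\delta e^{-m\Psi} \leq h_\delta$, the weighted bound on $u_m$ does not immediately yield global $L^2(h_\delta)$-integrability. To handle this I would exploit the holomorphy of $\tilde\varphi_m$ together with the $L^2$-extension property: because $\mathscr{I}(h_\delta) = \mathcal{O}_{X_2}$, $\tilde\varphi_m$ extends across $D$ to a holomorphic section of $K_X \otimes F \otimes \mathcal{O}(D)$ on $X_2$, and the identification $H^0(X_2, K_X \otimes F \otimes \mathcal{O}(D)) \simeq \mathcal{A}^{n,0}(Y_2, F_D, h_\delta)$ set up earlier in the section then places $\tilde\varphi_m$ in the required space. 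Taking $\tilde\varphi := \tilde\varphi_m$ for $m$ large completes the proof.
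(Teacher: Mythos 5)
Your argument is correct in substance but follows a genuinely different route from the paper. You run the direct, H\"ormander-style Runge scheme: cut off $\varphi$, solve $\dbar u_m=\dbar(\chi\varphi)$ in the weighted space $L^{n,0}(Y_2,F_D,h_{\delta_m})$ via Theorem \ref{dbareq} and the uniform curvature bound, and let the weight $e^{-m\Psi}$ force the correction to vanish on $Y_1$ as $m\to\infty$. The paper instead argues by duality: it takes $u$ orthogonal to the restrictions of all elements of $\mathcal{A}^{n,0}(Y_2,F_D,h_{\delta})$, extends it by zero to $u'$, writes $u'=\dbar^{\ast}_m v_m$ with $m$-independent bounds, passes to a weak limit $w$ with $\mathrm{supp}\,w\subseteq\ov{Y}_1$ and $\dbar^{\ast}w=u'$, and integrates by parts against $\zeta g$ to conclude via Hahn--Banach. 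Both rest on the same analytic input (completeness of $\ov{\omega}$, the $m$-independent lower bound on $[\xu\Theta_m,\Lambda_{\ov{\omega}}]$, and $\Psi\equiv 0$ on $X_1$); your version is more constructive and avoids the weak-limit and support-of-$w$ steps, while the paper's version never leaves the a priori $L^2$ spaces and so never has to re-certify membership of the approximant.

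Two points to tighten. First, with $\rho\equiv 1$ exactly on $(-\infty,c_1]$ the closed support of $d\chi$ may meet $\{\Phi=c_1\}$, where $\Psi=0$, so your $\eta=\inf_{\mathrm{supp}\,d\chi}\Psi$ could be $0$; choose $\rho\equiv1$ on $(-\infty,c_1'']$ for some $c_1''\in(c_1,c_1')$, so that $\mathrm{supp}\,d\chi\subset\{c_1''\le\Phi\le c_1'\}$ and $\eta=\tau(1/(c_2-c_1''))>0$. (Also, $\chi\varphi$ is not compactly supported in $Y_2$ --- its support accumulates on $D$ --- but all you actually need is that $\chi\varphi$ and $\dbar\chi\wedge\varphi$ lie in the relevant weighted $L^2$ spaces, which holds because $\varphi$ is $L^2(h_{\delta})$ on a neighborhood of $\ov{Y}_1$ and $|\dbar\chi|_{\ov{\omega}}$ is bounded.) Second, and more importantly, you correctly isolate the real issue: $\|u_m\|_m<\infty$ only gives local $L^2(h_{\delta})$-integrability of $u_m$ on compact subsets of $X_2$ (where $e^{-m\Psi}$ is bounded below), which suffices to extend $\tilde{\varphi}_m$ across $D$ to an element of $H^0(X_2,K_X\otimes F\otimes\mathcal{O}(D))$, but says nothing about integrability near $\pa X_2$. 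Your escape is the paper's asserted identification $H^0(X_2,K_X\otimes F\otimes\mathcal{O}(D))\simeq\mathcal{A}^{n,0}(Y_2,F_D,h_{\delta})$; be aware that global $L^2$-ness of an arbitrary holomorphic section near the boundary of an open manifold is not automatic, so that identification is itself the fragile link rather than anything you add. For the way the lemma is actually used (Lemma \ref{maxappro} and Theorem \ref{q1} only require a holomorphic section on $X_2$ close to $\varphi$ in sup-norm on $X_1$), your construction already delivers exactly what is needed, so any residual gap concerns the literal membership claim in the statement, not the downstream application.
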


\begin{proof}
According to the Hahn--Banach theorem (cf. \cite[Theorem 3.5]{Rudi91}, \cite[Lemma 4.3.1]{Horm90}), it suffices to show that if $u \in\ov{\mathcal{A}^{n,0}(\ov{Y}_1, F_D, h_{\delta})} \subset L^{n,0}(Y_1, F_D,h_{\delta})$ and
\begin{equation} \label{Y1or}
(u,f)_{Y_1} = \int_{Y_1} \left \langle u, f \right \rangle_{\ov{\omega}} h_{\delta} dV =0
\end{equation}
for any $f\in \mathcal{A}^{n,0}(Y_2, F_D,h_{\delta})$.
Then we have
\begin{equation} \label{Y2or}
(u,g)_{Y_1} = \int_{Y_1} \left \langle u, g \right \rangle_{\ov{\omega}}h_{\delta} dV =0
\end{equation}
provided $g\in \mathcal{A}^{n,0}(\ov{Y}_1, F_D, h_{\delta})$.

We change the definition of $u$ by setting $u=0$ on $Y_2\backslash Y_1$ and remain unchanged on $Y_1$, we denote it by $u'$. Since $\Psi \equiv 0$ on $X_1$ and therefore the above equality \eqref{Y1or} implies
$$
u' \perp \{ L^{n,0}(Y_2, F_D,h_{\delta_m}) \cap \ker \dbar\}
$$
for each $m$. Then we obtain $ u' \in \ov{\text{Im} \dbar^{\ast}_m} \subset L^{n,0}(Y_2, F_D,h_{\delta_m})$. According to the uniform estimate Lemma \ref{uniform-estimate}, we know
$$
\ov{\text{Im} \dbar^{\ast}_m} = \text{Im} \dbar^{\ast}_m.
$$
According to \cite{Horm65}, we acquire $u' = \dbar^{\ast}_m v_m$ for some $v_m \in L^{n,1}(Y_2, F_D, h_{\delta_m})$ with estimate
$$
\|v_m\|^2_m \leq C_1 \|u' \|^2_m \leq C_1 \| u'\|^2_0.
$$

 
We now define $w_m = e^{-m\Psi} v_m$. Next, we will demonstrate the existence of a weak limit of ${w_m}$ whose support lies within the closure of $Y_1$. This fact will be used in formula \eqref{64} below. Indeed we have:
$$
\|w_m\|^2_0 \leq  \|w_m\|^2_{-m} = \|v_m\|^2_m  \leq C_1 \| u'\|^2_0.
$$
Hence $\{w_m\}$ has a subsequence which is weakly convergent in $L^{n,1}(Y_2, F_D, h_{\delta})$, we denote the weak limit by $w$. We will show that $\text{supp} ~w \subseteq \ov{Y}_1$.
On the other hand, for every $\epsilon >0$, by the inequality $ \|w_m\|^2_{-m} \leq C_1 \| u'\|^2_0$, we have the inequality
$$
\int_{\{x\in Y_2 : \Psi>\epsilon\}} e^{m\Psi} \left \langle w_m, w_m \right \rangle_{\ov{\omega}} h_{\delta}dV \leq C_1 \|u' \|^2_0.
$$
Thus we have
$$
e^{m\epsilon}\int_{\{x\in Y_2 : \Psi>\epsilon\}}  \left \langle w_m, w_m \right \rangle_{\ov{\omega}} h_{\delta}dV \leq C_1 \|u' \|^2_0
$$
for each $m$. It follows that $\int_{\{x\in Y_2 : \Psi>\epsilon\}}  \left \langle w_m, w_m \right \rangle_{\ov{\omega}}h_{\delta} dV $ tends to zero and hence $w_m \rightarrow 0$ almost everywhere in $\{x\in Y_2 : \Psi>\epsilon\}$. As a consequence, the weak limit $w=0$ on $\{x\in Y_2 : \Psi>\epsilon\}$ for every $\epsilon>0$. In summary, we have
$$
\text{supp}~ w \subseteq \ov{Y}_1 \quad \text{and} \quad \dbar^{\ast} w= u'.
$$
In fact, for any corresponding compactly supported smooth form $\alpha$, one have
\begin{align} 
(w,\dbar \alpha)_{Y_2} &= \int_{Y_2} \left \langle w, \dbar \alpha \right \rangle_{\ov{\omega}} h_{\delta} dV \label{89}\\
&=\lim \int_{Y_2} \left \langle w_m, \dbar \alpha \right \rangle_{\ov{\omega}} h_{\delta} dV \nonumber\\
&=\lim \int_{Y_2} \left \langle  e^{-m\Psi} v_m, \dbar \alpha \right \rangle_{\ov{\omega}} h_{\delta} dV  \nonumber\\
 &=\lim \int_{Y_2} \left \langle v_m, \dbar \alpha \right \rangle_{\ov{\omega}} h_{\delta}\cdot e^{-m\Psi} dV   \nonumber\\
 &=\lim \int_{Y_2} \left \langle \dbar^{\ast}_m v_m, \alpha \right \rangle_{\ov{\omega}} h_{\delta}\cdot e^{-m\Psi} dV   \nonumber\\
  &=\lim \int_{Y_2} \left \langle u', \alpha \right \rangle_{\ov{\omega}} h_{\delta}\cdot e^{-m\Psi} dV   \nonumber\\
  &= \int_{Y_2} \left \langle u', \alpha \right \rangle_{\ov{\omega}} h_{\delta} dV = (u', \alpha)_{Y_2}.  \nonumber
\end{align}
The second equality is due to the fact that \( w \) is the weak limit of \( w_m \) in \( L^{n,1}(Y_2, F_D, h_{\delta}) \), and recall that \( u' = 0 \) outside of \( Y_1 \).

For any open neighborhood $H_1$ of $X_1$ in $X_2$. We can take a $\mathcal{C}^{\infty}$ function $\zeta$ on $X_2$ satisfying $0\leq \zeta \leq 1$, supp $\zeta \subseteq H_1$ and $\zeta =1$ on $X_1$.
For these $g \in \mathcal{A}^{n,0}(\ov{Y}_1, F_D, h_{\delta})$, we still have $\dbar(\zeta g) =0$ on $Y_1$. And we arrange $H_1$ very close to $X_1$ such that $g$ is defined on $H_1\backslash D$. Hence $\zeta g$ is defined on $Y_2$ and obviously belong in $L^{n,0}(Y_2, F_D, h_{\delta})$. So
\begin{align}
(u,g)_{Y_1} &= \int_{Y_1} \left \langle u, g \right \rangle_{\ov{\omega}} h_{\delta}dV = \int_{Y_2} \left \langle u', \zeta g \right \rangle_{\ov{\omega}}h_{\delta} dV \nonumber\\
&= \int_{Y_2} \left \langle \dbar^{\ast}w, \zeta g \right \rangle_{\ov{\omega}}h_{\delta} dV \nonumber\\
&= \int_{Y_2} \left \langle w, \dbar(\zeta g) \right \rangle_{\ov{\omega}}h_{\delta} dV  \label{64}\\
&= 0 \nonumber.
\end{align}
This confirms the equality \eqref{Y2or} and therefore completes the proof of Lemma \ref{appro}.
\end{proof}

\begin{definition} [Semi-norms]
Let $h$ be any smooth metric of $F_D = F\otimes \mathcal{O}(D)$ on the whole $X$. For a fixed real number $c$, the sublevel set $X_c$ is relatively compact in $X$. Let $K$ be a compact subset of $X_c$, we set
$$
|\varphi|_{K} := \underset{x\in K}{\text{sup}} \sqrt{\left \langle \varphi, \varphi \right \rangle_{\omega}h(x)}
$$
for $\varphi \in H^0(X_c, K_X \otimes F_D)$, where $\left \langle \varphi, \varphi \right \rangle_{\omega} h(x)$ be the pointwise norms and it is independent to $\omega$ because $\varphi$ is an $(n,0)$-form.
\end{definition}

We can find positive constants $M_2$ such that $h \leq M_2 h_{\delta}$ on $Y_c$, here $M_2$ is a  constant depends on $Y_c$. So using Cauchy's integral formula in each local coordinate $U_i$ with $U_i \cap K \neq \emptyset$, we have
\begin{align*}
 |\varphi|^2_{U_i\cap K}  &\leq M_3 \int_{U_i\cap K} |\varphi|^2 h dV\\
 & \leq M_2 M_3 \int_{U_i\cap K} |\varphi|^2 h_{\delta} dV \\
 & \leq M_2 M_3 \|\varphi \|_0^2.
\end{align*}
This shows we can find a positive constant $M$ depends on $X_c$ such that
$$
|\varphi|_K \leq M \|\varphi \|_0.
$$
In summary, we get the desired approximation.
\begin{lemma} \label{maxappro}
Let $X_1 \subset X_2$ be the pair of sublevel sets. Then for any holomorphic section $\varphi \in H^0(\ov{X}_1, K_X\otimes F\otimes \mathcal{O}(D))$ and for any $\epsilon >0$, there exists a section $\tilde{\varphi} \in H^0(X_2, K_X\otimes F\otimes \mathcal{O}(D))$ such that $|\tilde{\varphi} - \varphi|_{\ov{X}_1} < \epsilon$. Note that here $\varphi \in \mathcal{A}^{n,0}(Y_1, F_D, h_{\delta})$.
\end{lemma}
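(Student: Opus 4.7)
The plan is to repackage the $L^2$ approximation of Lemma~\ref{appro} as a sup-norm approximation using the pointwise comparison just derived. The bridge between the two formulations is the $L^2$ extension identifications
$$H^0(\ov X_1, K_X\otimes F\otimes \mathcal{O}(D)) \simeq \mathcal{A}^{n,0}(\ov Y_1, F_D, h_\delta), \qquad H^0(X_2, K_X\otimes F\otimes \mathcal{O}(D)) \simeq \mathcal{A}^{n,0}(Y_2, F_D, h_\delta),$$
recorded immediately after the construction of $h_\delta$: since $0<\delta<1$ the logarithmic factors do not spoil local integrability, so $\mathscr{I}(h_\delta) = \mathcal{O}_{X_2}$ and holomorphic $(n,0)$-forms automatically extend across $D$.

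Given $\varphi \in H^0(\ov X_1, K_X\otimes F\otimes \mathcal{O}(D))$, first restrict it to $\ov Y_1$ and view it as an element of $\mathcal{A}^{n,0}(\ov Y_1, F_D, h_\delta)$. For a prescribed auxiliary $\eta > 0$, Lemma~\ref{appro} produces $\tilde\varphi \in \mathcal{A}^{n,0}(Y_2, F_D, h_\delta)$ with $\|\tilde\varphi|_{Y_1} - \varphi\|_0^2 < \eta$; via the identification above this $\tilde\varphi$ promotes to a holomorphic section in $H^0(X_2, K_X\otimes F\otimes \mathcal{O}(D))$.

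To upgrade the $L^2$ control into a sup-norm estimate on $X_1$, apply the pointwise bound $|\psi|_{X_1} \leq M\|\psi\|_0$ established in the paragraph just preceding the lemma statement, taking $\psi = \tilde\varphi - \varphi$, which is a holomorphic $F_D$-valued $(n,0)$-form defined on a neighborhood of $\ov X_1$ in $X_2$. This yields $|\tilde\varphi - \varphi|_{X_1} \leq M\sqrt{\eta}$; choosing $\eta < (\epsilon/M)^2$ at the outset delivers the desired inequality $|\tilde\varphi - \varphi|_{X_1} < \epsilon$.

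The main substantive work has already been absorbed into Lemma~\ref{appro} (the Runge-type approximation, which is the only real analytic step). The only potential subtlety in the present lemma is ensuring that the comparison constant $M$ in the pointwise-to-$L^2$ inequality depends solely on the fixed pair $X_1 \subset X_2$, and not on the particular approximant $\tilde\varphi$. This is handled by the boundedness of the ratio $h/h_\delta$ on $\ov Y_1$: since $\ov X_1$ is compactly contained in $X_2$ and bounded away from itself inside $X_2$, the smooth metric $h$ and the singular metric $h_\delta$ differ only by a factor of the form $\prod_i (\log^2 \|\sigma_i\|_i^2)^{\kappa/2}\|\sigma_i\|_i^{-2(1-\delta)}$ multiplied by smooth bounded quantities, and Cauchy's integral formula in local coordinates then yields the required uniform $M$.
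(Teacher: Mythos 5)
Your proposal is correct and follows essentially the same route as the paper's own (largely implicit) argument: identify $H^0(\ov{X}_1,K_X\otimes F\otimes\mathcal{O}(D))$ and $H^0(X_2,K_X\otimes F\otimes\mathcal{O}(D))$ with the spaces $\mathcal{A}^{n,0}$ via $\mathscr{I}(h_{\delta})=\mathcal{O}_{X_2}$, invoke Lemma \ref{appro} for the $L^2$ approximation, and convert to the sup semi-norm through the comparison $h\leq M_2 h_{\delta}$ together with Cauchy's integral formula, choosing the $L^2$ tolerance $\eta$ accordingly. Be aware (as in the paper itself) that the Cauchy-estimate constant is uniform only on compact subsets of $X_1$, since the $L^2$ control from Lemma \ref{appro} is over $Y_1$ alone and the mean-value balls shrink near $\partial X_1$; this matches the paper's own level of precision and is exactly the form in which the lemma is used (on $\ov{X}_{v-2}\subset X_{v-1}$) in the proof of Theorem \ref{q1}.
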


Now we can prove the vanishing of $H^1(X, K_X\otimes \mathcal{O}_X(D)\otimes F)$.

\begin{theorem} \label{q1}
Let $X$ be a weakly pseudoconvex K\"ahler manifold and $F$ be a positive line bundle on $X$. Let $D$ be a simple normal crossing divisor on $X$, here the number of irreducible components of the divisor may be infinite. We have
$$
H^1(X, K_X\otimes \mathcal{O}_X(D)\otimes F) = 0.
$$
\end{theorem}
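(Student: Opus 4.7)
The plan is to adapt the Čech cohomology argument in the proof of Theorem \ref{q2}, replacing the algebraic gluing step (which required the existence of $(q{-}2)$-cochains) by a Runge-type approximation based on Lemma \ref{maxappro}. Choose, via Sard, a sequence $c_v \nearrow +\infty$ of regular values of $\Phi$ with smooth $\partial X_v$, so that $\mathfrak{X} = \{X_v\}_{v \geq 0}$ is an exhausting Leray cover for $\mathcal{F} := K_X \otimes \mathcal{O}(D) \otimes F$ by Theorem \ref{localvan}, and likewise each $\mathfrak{X}_v = \{X_k\}_{k \leq v}$ is a Leray cover of $X_v$. Given $\sigma \in Z^1(\mathfrak{X}, \mathcal{F})$, put $\sigma_v := \sigma|_{\mathfrak{X}_v}$, and choose by local vanishing a $0$-cochain $\alpha_v \in C^0(\mathfrak{X}_v, \mathcal{F})$ with $\delta \alpha_v = \sigma_v$. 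Then $\alpha_{v+1} - \alpha_v \in Z^0(\mathfrak{X}_v, \mathcal{F})$; since all non-empty intersections on the nested cover are again sublevel sets, this $0$-cocycle is determined by a single global section $\gamma_v \in H^0(X_v, \mathcal{F})$.

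For $q \geq 2$ one writes each $\gamma_v$ as a coboundary $\delta \beta_{v-1}$ inside $C^{q-2}$ to make the modified primitives fit together exactly. For $q = 1$ no such $(q{-}2)$-cochain exists, so instead I would use Lemma \ref{maxappro} to inductively construct sections $\eta_v \in H^0(X_v, \mathcal{F})$ with $\eta_0 := 0$ and
\[
\bigl| \eta_{v+1} - (\gamma_v + \eta_v) \bigr|_{X_{v-1}} < 2^{-v}, \qquad v \geq 1.
\]
This application of Lemma \ref{maxappro} is legitimate because $X_v$ is open in $X_{v+1}$ and contains $\overline{X}_{v-1}$, so the section $\gamma_v + \eta_v$, defined on $X_v$, may be approximated by a section on $X_{v+1}$ in sup-norm over $X_{v-1}$.

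Set $\tilde{\alpha}_v := \alpha_v - \eta_v|_{\mathfrak{X}_v}$, which still satisfies $\delta \tilde{\alpha}_v = \sigma_v$ since $\eta_v$ contributes a $0$-cocycle. A telescoping computation gives, for fixed $k$ and $v' > v > k$,
\[
\tilde{\alpha}_{v',k} - \tilde{\alpha}_{v,k} = \sum_{j=v}^{v'-1} (\gamma_j - \eta_{j+1} + \eta_j)\bigr|_{X_k},
\]
whose sup-norm on $X_k$ is bounded by $\sum_{j \geq v} 2^{-j} = 2^{-v+1}$. Hence $\{\tilde{\alpha}_{v,k}\}_v$ is Cauchy in the sup-norm on $X_k$ and converges uniformly to a holomorphic section $\alpha_k \in H^0(X_k, \mathcal{F})$. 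Compatibility of the $\alpha_k$ under restriction and the identity $\delta \alpha = \sigma$ follow by passing to the limit in $\tilde{\alpha}_{v,k_1}|_{X_{k_0}} - \tilde{\alpha}_{v,k_0} = \sigma_{k_0 k_1}$, valid for $v \geq k_1$. This exhibits $\sigma$ as a coboundary, proving $H^1(\mathfrak{X}, \mathcal{F}) = H^1(X, \mathcal{F}) = 0$.

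The substantive obstacle is packaged entirely into Lemma \ref{maxappro}: the whole proof rests on the fact that holomorphic sections of $K_X \otimes \mathcal{O}(D) \otimes F$ on a sublevel set can be approximated uniformly on compacta by global sections on any larger sublevel set. This is the analogue of Runge's theorem in the weakly pseudoconvex setting and is where the positivity of $F$, the Bochner--Kodaira--Nakano estimate of Lemma \ref{uniform-estimate}, and the Hörmander $L^2$-duality argument of Lemma \ref{appro} are used; once this is in hand, the Čech-theoretic part reduces to the elementary Cauchy-sequence argument above.
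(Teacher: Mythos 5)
Your proposal is correct and follows essentially the same route as the paper: exhaust $X$ by a Leray cover of sublevel sets, solve $\delta\alpha_v=\sigma_v$ on each $X_v$ by the local vanishing of Theorem \ref{localvan}, and then use the Runge-type approximation of Lemma \ref{maxappro} to correct the primitives into a sequence that is Cauchy in sup-norm on each $X_k$ and converges to a global $0$-cochain $\alpha$ with $\delta\alpha=\sigma$. The only difference is bookkeeping: you accumulate the corrections in the sections $\eta_v$, while the paper directly constructs the corrected cochains $\lambda_v$ with $\delta\lambda_v=\sigma_v$ and $|\lambda_{v+1}-\lambda_v|_{\ov{X}_{v-1}}<2^{-v}$.
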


\begin{proof}
Recall that we have established the vanishing of $H^1$ for $K_X\otimes \mathcal{O}_X(D)\otimes F$ on each $X_c$ for any real number $c$, specifically $H^1(X_c, K_X\otimes F\otimes \mathcal{O}(D)) = 0$. Let $\mathfrak{X}= \{ X_v\}_{v\geq0}$ and $\mathfrak{X}_v= \{ X_k\}_{k\leq v}$ be the covering of $X$ and $X_v$ respectively. Moreover, both $\mathfrak{X}$ and $\mathfrak{X}_v$ serve as Leray coverings for the sheaf $K_X\otimes F\otimes \mathcal{O}(D)$ on $X$ and $X_v$, respectively. We have
\begin{equation*}
H^1(X, K_X\otimes F\otimes \mathcal{O}(D)) = \check{H}^1(\mathfrak{X}, K_X\otimes F\otimes \mathcal{O}(D)),
\end{equation*}
and
\begin{equation} \label{last}
H^1(X_v, K_X\otimes F\otimes \mathcal{O}(D)) = \check{H}^1(\mathfrak{X}_v, K_X\otimes F\otimes \mathcal{O}(D)) =0,
\end{equation}
for each $v$.

For any $1$-cocycle $\sigma \in Z^1(\mathfrak{X}, K_X\otimes \mathcal{O}(D)\otimes F)$, let $\sigma_v$ be the restriction of $\sigma$ to $\mathfrak{X}_v$. Then it is obviously that $\sigma_v \in Z^1(\mathfrak{X}_v, K_X\otimes \mathcal{O}(D)\otimes F)$. According to the local vanishing \eqref{last}, there is a $0$-cochain $\alpha_v \in  C^0(\mathfrak{X}_v, K_X\otimes \mathcal{O}(D)\otimes F)$ such that $\delta \alpha_v = \sigma_v$. As an element of $ C^0(\mathfrak{X}_{v-1}, K_X\otimes \mathcal{O}(D)\otimes F)$, we have $\delta \alpha_v = \delta \alpha_{v-1}$, and hence $\alpha_v - \alpha_{v-1} \in Z^0(\mathfrak{X}_{v-1}, K_X\otimes \mathcal{O}(D)\otimes F)$, i.e., the holomorphic section on $X_{v-1}$, write as $\alpha_v - \alpha_{v-1} \in \Gamma(X_{v-1}, K_X\otimes \mathcal{O}(D)\otimes F)=H^0(X_{v-1}, K_X\otimes \mathcal{O}(D)\otimes F)$.
Now by the approximation Lemma \ref{maxappro}, for any $\epsilon>0$ we can find a $ \gamma \in \Gamma(X_{v}, K_X\otimes \mathcal{O}(D)\otimes F)$ so as to
$$
|\alpha_v - \alpha_{v-1} - \gamma|_{\ov{X}_{v-2}} < \epsilon.
$$
Let's start with $v=1$. For any $\epsilon>0$, we know there exists a ${\gamma}_1\in\Gamma(X_{1}, K_X\otimes \mathcal{O}(D)\otimes F)$ such that $|\alpha_1 - \alpha_0 - {\gamma}_1|_{\ov{X}_{-1}} < \epsilon$, here we choose a relativelly compact subset $X_{-1} \subset X_0$. We set $\lambda_1 := (\alpha_1 - {\gamma}_1)$, we know $ \lambda_1 \in C^0(\mathfrak{X}_{1}, K\otimes \mathcal{O}(D)\otimes F)$ since $\alpha_1 \in C^0(\mathfrak{X}_{1}, K\otimes \mathcal{O}(D)\otimes F)$ and ${\gamma}_1 \in \Gamma(X_{1}, K_X\otimes \mathcal{O}(D)\otimes F)$.
It is easy to see that $\delta \lambda_1 = \delta \alpha_1 = \sigma_1$.

Assume we have constucted a sequence $\lambda_0, \cdots, \lambda_k$ such that for any $0\leq j\leq k$
\begin{enumerate}
    \item $\lambda_j \in C^0(\mathfrak{X}_{j}, K\otimes \mathcal{O}(D)\otimes F)$ and $\lambda_0 = \alpha_0$,
    \item $\delta \lambda_j = \sigma_j$,
    \item $ \lambda_{j} - \lambda_{j-1}\in \Gamma(X_{j-1}, K_X\otimes \mathcal{O}(D)\otimes F)$ and $|\lambda_{j} - \lambda_{j-1}|_{\ov{X}_{j-2}} < \frac{1}{2^{j-1}}$.
\end{enumerate}
 We know there is a $0$-cochain $\alpha_{k+1} \in  C^0(\mathfrak{X}_{k+1}, K_X\otimes \mathcal{O}(D)\otimes F)$ such that $\delta \alpha_{k+1} = \sigma_{k+1}$. Thus, $\alpha_{k+1} - \lambda_{k} \in \Gamma(X_{k}, K_X\otimes \mathcal{O}(D)\otimes F)$.  For a given constant $\frac{1}{2^{k}}$, we know there exists a ${\gamma}_{k+1}\in\Gamma(X_{k+1}, K_X\otimes \mathcal{O}(D)\otimes F)$ such that $|\alpha_{k+1} - \lambda_{k} - \gamma_{k+1}|_{\ov{X}_{k-1}} < \frac{1}{2^{k}}$. We set $\lambda_{k+1} := \alpha_{k+1} - \gamma_{k+1}$ as desired.

Therefore, inductively we have a sequence $\{\lambda_v\}_{v\geq 0}$ such that
\begin{enumerate}
    \item $\lambda_v \in C^0(\mathfrak{X}_{v}, K\otimes \mathcal{O}(D)\otimes F)$ and $\lambda_0 = \alpha_0$,
    \item $\delta \lambda_v = \sigma_v$,
    \item $ \lambda_{v+1} - \lambda_{v}\in \Gamma(X_{v}, K_X\otimes \mathcal{O}(D)\otimes F)$ and $|\lambda_{v+1} - \lambda_{v}|_{\ov{X}_{v-1}} < \frac{1}{2^v}$.
\end{enumerate}
As a consequence, for any $v$,
$$
\lim_{u\geq v}\lambda_u = \lambda_v + \sum_{k\geq v}(\lambda_{k+1}-\lambda_k)
$$
defines an element of $C^0(\mathfrak{X}_{v}, K\otimes \mathcal{O}(D)\otimes F)$ when restricted to $\mathfrak{X}_v$. Moreover
$$
\lim_{u\geq v+1}\lambda_u = \lambda_{v+1} + \sum_{k\geq v+1}(\lambda_{k+1}-\lambda_k)
$$
defines the same element as $\lim_{u\geq v}\lambda_u$ when restricted to $\mathfrak{X}_{v}$. Thus we can define an element $\lambda$ of $C^0(\mathfrak{X}, K_X\otimes \mathcal{O}(D)\otimes F)$ by $\lambda = \lim_{v \to \infty}\lambda_v$. For any $v$, when restricted to $C^0(\mathfrak{X}_v, K_X\otimes \mathcal{O}(D)\otimes F)$ we have
$$
\delta (\lim_{u\geq v}\lambda_u) = \lim_{u\geq v} \delta\lambda_u = \sigma_v.
$$
Hence we have $\delta \lambda = \sigma$ and the proof is complete.
\end{proof}

\bigskip
\address{ 
Graduate School of Mathematical \endgraf
The University of Tokyo \endgraf
3--8--1 Komaba, Meguro-Ku, Tokyo \endgraf
Japan
}
{zouyongpan@gmail.com}
%
%


\begin{thebibliography} {99}


\bibitem[Abde80]{Abdelkader80} O. Abdelkader,
\newblock{Annulation de la cohomologie d'une vari\'et\'e k\"ahl\'erienne faiblement $1$-compl$\grave{e}$te $\grave{a}$ valeur dans un fibr\'e vectoriel holomorphe semi-positif (French),} C. R. Acad. Sci. Paris Ser. A-B 290 (1980), no. 2, 75--78.





\bibitem[De70]{De70} P. Deligne,
\newblock{Equations differentielles a points singuliers reguliers}, (French) Lecture Notes in Mathematics, Vol. 163. Springer-Verlag, Berlin-New York, 1970.


\bibitem[Dem12a]{Dem12a} J.-P. Demailly,
\newblock{Analytic methods in algebraic geometry},
Surveys of Modern Mathematics, Vol. 1 International Press, Somerville, MA, 2012.


\bibitem[Dem12b]{Dem12} J.-P. Demailly,
\newblock{ Complex Analytic and Differential Geometry},
September 2012, Open-Content Book, freely available from the author's web site.

 \bibitem[EsVi86]{EsVi86} H. Esnault and E. Viehweg,
 \newblock{Logarithmic de Rham complexes and vanishing theorems},  Invent. Math. 86 (1986), no. 1, 161--194.

\bibitem[EsVi92]{EsVi92} H. Esnault and E. Viehweg,
 \newblock{Lectures on vanishing theorems}, DMV Seminar, 20. Birkhauser Verlag, Basel, 1992.







\bibitem[HLWY16]{HLWY16} C. Huang, K. Liu, X. Wan and X. Yang,
\newblock{Logarithmic vanishing theorems on compact K\"ahler manifolds I,}
\newblock {http://arxiv.org/abs/1611.07671}{arXiv: 1611.07671}.

\bibitem[H\"orm65]{Horm65} L. H\"ormander,
\newblock{$L^2$ estimates and existence theorems for the $\dbar$ operator}, Acta Mathematica, 1965, 113(1), 89--152.

\bibitem[H\"orm90]{Horm90} L. H\"ormander,
\newblock{An introduction to complex analysis in several variables}, Third edition,
North-Holland Publishing Co., Amsterdam, 1990. xii+254 pp.

\bibitem[Kaza73]{Kazama73} H. Kazama,
\newblock{Approximation theorem and application to Nakano's vanishing theorem for weakly 1-complete manifolds,}  Mem. Fac. Sci. Kyushu Univ. Ser. A 27 (1973), 221--240.




\bibitem[LRW19]{LRW19} K. Liu, S. Rao and X. Wan,
\newblock{Geometry of logarithmic forms and deformations of complex structures},
J. Algebraic Geom. 28 (2019), no. 4, 773--815.


\bibitem[LWY19]{LWY19} K. Liu, X. Wan and X. Yang,
\newblock{Logarithmic vanishing theorems for effective $q$-ample divisors}, Sci. China Math. 62 (2019), 2331--2334.


\bibitem[MaMa07]{MaMa07} X. Ma and G. Marinescu,
\newblock{Holomorphic Morse inequalities and Bergman kernels}, Progress in
Mathematics 254, Birkh\"auser Boston, Inc., Boston, MA. 2007, 422 pp.


\bibitem[Naka70]{Nakano70} S. Nakano,
    \newblock{On the inverse of monoidal transformation}, Publ. Res. Inst. Math. Sci. 6 (1970/71), 483--502.

\bibitem[Naka73]{Nakano73} S. Nakano,
    \newblock{Vanishing theorems for weakly 1-complete manifolds}, Number theory, algebraic geometry and commutative algebra, in honor of Yasuo Akizuki, pp. 169--179. Kinokuniya, Tokyo, 1973.

\bibitem[Naka74]{Nakano74} S. Nakano,
    \newblock{Vanishing theorems for weakly 1-complete manifolds II}, Publ. Res. Inst. Math. Sci. 10 (1974/75), no. 1, 101--110.

\bibitem[Nori78]{Norimatsu78} Y. Norimatsu,
    \newblock{Kodaira vanishing theorem and Chern classes for $\pa$-manifolds}, Proc. Japan Acad. Ser. A Math. Sci. 54 (1978), no. 4, 107--108.


\bibitem[Ohsa21]{Ohsa21} T. Ohsawa,
\newblock{On the cohomology vanishing with polynomial growth on complex manifolds with pseudoconvex boundary}, Publ. Res. Inst. Math. Sci. 59 (2023), no. 4, 759--768. 


\bibitem[OhTa81]{TaOh81} T. Ohsawa and K. Takegoshi,
\newblock{A vanishing theorem for $H^p(X, \Omega^q(B))$ on weakly 1-complete manifolds}, Publ. Res. Inst. Math. Sci. 17 (1981), no. 2, 723--733.

\bibitem[Pril71]{Pr71} D. Prill,
\newblock{The divisor class groups of some rings of holomorphic functions},
Math. Z. 121 (1971), 58--80.

\bibitem[Rudi91]{Rudi91} W. Rudin,
\newblock{Functional analysis}, Second edition,
Internat. Ser. Pure Appl. Math. McGraw--Hill, Inc., New York, 1991. xviii+424 pp.





\bibitem[Take81]{Take81} K. Takegoshi,
\newblock{A generalization of vanishing theorems for weakly 1-complete manifolds}, Publ. Res. Inst. Math. Sci. 17 (1981), no. 1, 311--330.



\bibitem[Zuck79]{Zucker79} S. Zucker,
    \newblock{Hodge theory with degenerating coefficients: $L^2$ cohomology in the Poincar\'e metric}, Ann. of Math. (2) 109 (1979), no. 3, 415--476.



\end{thebibliography}
\end{document}